\documentclass{amsart}
\usepackage[francais,english]{babel}
\usepackage[T1]{fontenc}
\usepackage{amssymb,amsmath,dsfont}
\usepackage{amsrefs}

\newtheorem{theorem}{Theorem}
\newtheorem{proposition}{Proposition}

\newtheorem{lemma}{Lemma}

\newcommand{\B}{\mathbb{B}}

\newcommand{\R}{\mathbb{R}}

\newcommand{\N}{\mathbb{N}}

\def \crit {2^\star(s)}

%\newenvironment{proof}[1][Proof]
%\textbf{#1.} }{\ \hspace*{0.1cm}\hfill \rule{0.5em}{0.5em}}
%\def\mathbi#1{\textbf{\em #1}}
  % Capital
 % Petit majuscule

\title{Hardy-Sobolev Equations on Compact Riemannian Manifolds}
\author{Hassan Jaber}
\address{Universit\'e de Lorraine, Institut Elie Cartan de Lorraine, UMR 7502, Vand\oe uvre-l\`es-Nancy, F-54506, France.}
\email{Hassan.Jaber@univ-lorraine.fr}
\date{October, \ 29th 2013.}
\begin{document}
\maketitle
\begin{abstract}
Let $(M,g)$ be a compact Riemannian Manifold of dimension $n \geq 3$, $x_0 \in M$, and $s \in (0,2)$. We let $\crit: = \frac{2(n-s)}{n-2}$ be the critical Hardy-Sobolev exponent. 
We investigate the existence of positive distributional solutions $u\in C^0(M)$ to the critical equation
$$\Delta_g u+a(x) u=\frac{u^{\crit-1}}{d_g(x,x_0)^s}\; \; \hbox{ in} \ M$$
where $\Delta_g:=-\hbox{div}_g(\nabla)$ is the Laplace-Beltrami operator, and $d_g$ is the Riemannian distance on $(M,g)$. Via a minimization method in the spirit of Aubin, we prove existence in dimension $n\geq 4$ 
when the potential $a$ is sufficiently below the scalar curvature at $x_0$. In dimension $n=3$, 
we use a global argument and we prove existence when the mass of the linear operator $\Delta_g + a$
is positive at $x_0$. As a byproduct of our analysis, we compute the best first constant for the related Riemannian Hardy-Sobolev inequality.  
\end{abstract}\vskip0.2cm

Let $(M,g)$ be a compact Riemannian Manifold of dimension $n \geq 3$ without boundary. Given $s\in (0,2)$, $x_0\in M$, and $a\in C^0(M)$, we consider distributional solutions $u\in C^{0}(M)$  to the equation
\begin{equation}\label{art-1-H-S-eqt-1}
\Delta_g u+a(x) u=\frac{u^{\crit-1}}{d_g(x,x_0)^s}\; \; \hbox{ in }M
\end{equation}
where $\crit := \frac{2(n-s)}{n-2}$ is the Hardy-Sobolev exponent. More precisely, let $H_1^2(M)$ be the completion of $C^\infty(M)$ for the norm $u\mapsto \Vert u\Vert_2+\Vert\nabla u\Vert_2$. 
The exponent $\crit$ is critical in the following sense: the Sobolev space $H_1^2(M)$ is continuously embedded in the weighted Lebesgue space $L^p(M, d_g(\cdot, x_0)^{-s})$ if and only if $1\leq p\leq \crit$, 
and this embedding is compact if and only if $1\leq p<\crit$.

\medskip\noindent There is an important literature on  Hardy-Sobolev equations in the Euclidean setting of a domain of $\R^n$, in particular to show existence or non-existence of solutions, 
see for instance Ghoussoub-Yuan \cite{GH-Yuan}, Li-Ruf-Guo-Niu \cite{Li-Ruf-Guo-Niu}, Musina \cite{Musina}, Pucci-Servadei \cite{Pucci-Servadei}, Kang-Peng \cite{Kang-Peng}, and the references therein. 
In particular, in the spirit of Brezis-Nirenberg, Ghoussoub-Yuan \cite{GH-Yuan}
 proved the existence of solution to equations like \eqref{art-1-H-S-eqt-1} when $n\geq 4$ and the potential $a$ achieves negative values at the interior singular point $x_0$. In the present manuscript, 
our objective is both to study the influence of the curvature when dealing with a Riemannian Manifold, 
and to tackle dimension $n=3$.

\medskip\noindent We consider the functional 
$$J(u):=\frac{\int_M(|\nabla u|_g^2+au^2)\, dv_g}{\left(\int_M\frac{|u|^{\crit}}{d_g(x,x_0)^s}\, dv_g\right)^{\frac{2}{\crit}}}\; ; \; u\in H_1^2(M)\setminus \{0\},$$
which is well-defined due to the above-mentioned embeddings. Here $dv_g$ denotes the Riemannian element of volume. When the operator $\Delta_g+a$ is coercive, then, up to multiplication by a positive constant, critical points of the functional $J$ (if they exist) are solutions to equation \eqref{art-1-H-S-eqt-1}. In the sequel, we assume that $\Delta_g+a$ is coercive.
In the spirit of Aubin \cite{Aubin-2}, we investigate the existence of solutions to  \eqref{art-1-H-S-eqt-1} by minimizing the functional $J$: it is classical for this type 
of problem that the difficulty is the lack of compactness for the critical embedding. Since the resolution of the Yamabe problem (see \cite{Aubin-2}, \cite{Schoen-1} and \cite{Tru-2}), it is also well known that there exists a dichotomy 
between high dimension (see Aubin \cite{Aubin-2}) where the arguments are local, and small dimension (see Schoen \cite{Schoen-1}) where the arguments are global.

\medskip\noindent In the sequel, we let $\hbox{Scal}_g(x)$ be the scalar curvature at $x\in M$. We let $G_{x_0}: M\backslash \{x_0\}\to \R$ be the Green's function at $x_0 $ for the operator $\Delta_g+a$ (this is defined since the operator is coercive).
 In dimension $n=3$, there exists $m(x_0)\in\R$ such that for all $\alpha\in (0,1)$
$$G_{x_0}(x)=\frac{1}{\omega_2d_g(x,x_0)}+m(x_0)+O(d_g(x,x_0)^\alpha)\hbox{ when }x\to x_0.$$
Here and in the sequel, $\omega_k$ denote the volume of the canonical $k-$dimensional unit sphere $\mathbb{S}^k$, $k\geq 1$. The quantity $m(x_0)$ is refered to as the mass of the point $x_0\in M$. Our main result states as follows:

\begin{theorem}\label{art-H-S-th-1} Let $x_0\in M$, $s\in (0,2)$, and $a\in C^{0}(M)$ be such that the operator $\Delta_g+a$ is coercive. We assume that
\begin{equation}\label{art-1-H-S-eqt-2}
\left\{\begin{array}{ll}
a(x_0)< c_{n,s}\hbox{Scal}_g(x_0)&\hbox{ if }n\geq 4\\
m(x_0)>0&\hbox{ if }n=3.
\end{array}\right\}
\end{equation} 
with $c_{n,s}:=\frac{(n-2)(6-s)}{12(2n-2-s)}$. Then there exists a positive solution $u\in C^{0}(M)\cap H_1^2(M)$ to the Hardy-Sobolev equation \eqref{art-1-H-S-eqt-1}. 
Moreover, $u\in C^{0,\theta}(M)$ for all $\theta\in (0,\min\{1, 2-s\})$ and we can choose $u$ as a minimizer of $J$.
\end{theorem}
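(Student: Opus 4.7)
\medskip\noindent
\textbf{Proof plan.} I would follow the classical Aubin--Schoen strategy adapted to the Hardy--Sobolev setting. Set
$$\mu_a(M):=\inf_{u\in H_1^2(M)\setminus\{0\}} J(u).$$
Coercivity of $\Delta_g+a$ makes $\mu_a(M)$ positive and finite. The first step is to identify the compactness threshold: a concentration analysis on a minimizing sequence $(u_k)$, using the embedding $H_1^2(M)\hookrightarrow L^{\crit}(M,d_g(\cdot,x_0)^{-s})$ together with compactness below $\crit$, shows that either $u_k$ converges strongly in $H_1^2$ to a nontrivial minimizer, or it concentrates at $x_0$, in which case a rescaling centered at $x_0$ produces a nontrivial extremal for the Euclidean Hardy--Sobolev inequality and yields
$$\mu_a(M)\geq \mu_s(\R^n),$$
where $\mu_s(\R^n)$ denotes the Euclidean best constant. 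Hence it suffices to prove the strict inequality $\mu_a(M)<\mu_s(\R^n)$ by exhibiting one test function whose $J$-value lies below $\mu_s(\R^n)$.

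\medskip\noindent
\textbf{Test functions in dimension $n\geq 4$.} Let $U$ be the radial Euclidean extremal of $\mu_s(\R^n)$ and set
$$u_\epsilon(x):=\chi(x)\,\epsilon^{-(n-2)/2}\,U\bigl(\exp_{x_0}^{-1}(x)/\epsilon\bigr),$$
with $\chi$ a smooth cutoff supported in a normal neighborhood of $x_0$. Passing to normal coordinates and using the Cartan expansion $\sqrt{|g|}(x)=1-\tfrac{1}{6}\mathrm{Ric}_g(x_0)_{ij}x^ix^j+O(|x|^3)$, one converts by radiality the Ricci contribution into a scalar-curvature term, while $a$ contributes $a(x_0)\int_{\R^n} U^2$ at the same order. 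For $n\geq 5$ the expansion takes the form
$$J(u_\epsilon)=\mu_s(\R^n)+A_{n,s}\bigl(a(x_0)-c_{n,s}\,\mathrm{Scal}_g(x_0)\bigr)\epsilon^2+o(\epsilon^2),$$
with $A_{n,s}>0$, and $c_{n,s}=\frac{(n-2)(6-s)}{12(2n-2-s)}$ arises as the precise ratio of the two weighted integrals of $U$ coming from the curvature and potential terms. The borderline case $n=4$ requires a logarithmic correction handled as in Aubin. Assumption \eqref{art-1-H-S-eqt-2} then yields the desired strict inequality for small $\epsilon$.

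\medskip\noindent
\textbf{Test functions in dimension $n=3$.} Here the local bubble alone is insufficient because the would-be subleading curvature term is dominated by the truncation error. Following Schoen's Yamabe idea, I would use a global test function interpolating between a rescaled bubble concentrated at $x_0$ and a positive multiple of the Green's function $G_{x_0}$ away from $x_0$, matched on an annulus of radius $\sim\sqrt{\epsilon}$. Integrating $\int_M(|\nabla u_\epsilon|^2+au_\epsilon^2)\,dv_g$ by parts via $(\Delta_g+a)G_{x_0}=\delta_{x_0}$ produces a boundary contribution on the matching annulus; expanding $G_{x_0}=(\omega_2 d_g(\cdot,x_0))^{-1}+m(x_0)+O(d_g^\alpha)$ extracts the mass from cross terms that would otherwise cancel, leading to
$$J(u_\epsilon)=\mu_s(\R^3)-B_s\,m(x_0)\,\epsilon+o(\epsilon)$$
with $B_s>0$ explicit. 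The hypothesis $m(x_0)>0$ then gives the strict inequality.

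\medskip\noindent
\textbf{Regularity and positivity.} A minimizer $u$ may be chosen nonnegative by replacing it with $|u|$. A Moser iteration adapted to the singular weight $d_g(\cdot,x_0)^{-s}$, exploiting that this weight lies in an appropriate Morrey space, yields $u\in L^\infty(M)$; off $x_0$ classical elliptic theory applies, and at $x_0$ a direct estimate on the distributional equation gives $u\in C^{0,\theta}(M)$ for every $\theta<\min\{1,2-s\}$. The strong maximum principle for the coercive operator $\Delta_g+a$ then upgrades $u\geq 0$ to $u>0$. I expect the main obstacle to be the dimension-$3$ mass expansion: one must choose the matching scale and the multiplicative constant with enough care that the delicate cancellations between the bubble and the Green's function leave exactly the mass term at the right order, while keeping rigorous control of the remainder $O(d_g^\alpha)$ throughout.
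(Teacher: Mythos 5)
Your overall strategy (minimization with threshold $K(n,s)^{-1}$, local curvature test functions for $n\geq 4$, a Green's-function/mass construction for $n=3$) is the same as the paper's, up to two acceptable variants: you run a direct concentration-compactness argument on a minimizing sequence where the paper uses the subcritical approximation $J_q$, $q\nearrow\crit$, combined with the almost-optimal inequality of Proposition \ref{art-H-S-prop-1}; and in dimension $3$ you propose Schoen-type matching of a bubble with the Green's function at scale $\sqrt{\epsilon}$, whereas the paper follows Druet and simply takes $v_\epsilon=\eta u_\epsilon+\sqrt{\epsilon}\,\beta_{x_0}$, which avoids any matching annulus and extracts the mass from cross terms via $(\Delta_g+a)\beta_{x_0}=f_{x_0}$; your version is plausible but strictly more delicate, since the weighted denominator $\int_M v_\epsilon^{\crit}d_g(\cdot,x_0)^{-s}dv_g$ must also be expanded through the matching region.

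The genuine gap is your treatment of positivity: ``the strong maximum principle for the coercive operator $\Delta_g+a$ then upgrades $u\geq 0$ to $u>0$'' does not work as stated at the singular point. Writing the equation as $(\Delta_g+h)u=0$ with $h=a-\lambda\,u^{\crit-2}d_g(\cdot,x_0)^{-s}$, the Harnack inequality (Gilbarg--Trudinger, Theorem 8.20) applies on balls away from $x_0$, where $h$ is continuous, and this gives $u>0$ on $M\setminus\{x_0\}$; but near $x_0$ the potential is unbounded and $u$ is only $C^{0,\theta}(M)$ with $\theta<\min\{1,2-s\}$ (solutions are not $C^2$ here), so neither the classical strong maximum principle nor a routine Harnack argument rules out $u(x_0)=0$, and continuity alone certainly does not, since $\{x_0\}$ is a single point. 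This is exactly the point the paper flags as requiring ``a suitable approach'': assuming $u(x_0)=0$, one first bootstraps, using $|u|\leq C d_g(\cdot,x_0)^{\alpha}$, to show $u\in C^1(M)$, and then runs a Hopf-type boundary-point argument on a ball touching $x_0$ to contradict $\nabla u(x_0)=0$ at the interior minimum (alternatively one could invoke a Green's representation $u(x_0)=\int_M G_{x_0}f_u\,dv_g>0$, but some such argument must be supplied). Your proposal as written is missing this step, and without it the conclusion $u>0$ on all of $M$ --- hence that $u$ solves \eqref{art-1-H-S-eqt-1} with a genuinely positive solution --- is not established.
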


\medskip\noindent As a consequence of the Positive Mass Theorem (see \cite{Schoen-2}, \cite{Schoen-3}), we get (see Druet \cite{Druet-1} and Proposition \ref{art-H-S-cor-ex-1} in Section 4 below) that $m(x_0)>0$ for $n=3$ when $a \leq \hbox{Scal}_g/8$, 
with the additional assumption that $(M,g)$ is not conformally equivalent to the canonical $3-$sphere if $a\equiv \hbox{Scal}_g/8$.

\medskip\noindent Theorem \ref{art-H-S-th-1} suggests some remarks. For equations of scalar curvature type, that is when $s=0$, 
a similar result was obtained by Aubin \cite{Aubin-2} (for $n\geq 4$) and by Schoen \cite{Schoen-1} (see also Druet \cite{Druet-1}) (for $n=3$): however, when $s\in (0,2)$, the problem is subcritical outside the singular point $x_0$, 
and therefore it is natural to get a condition at this point. Another remark is that, when $s=0$, Aubin (see \cite{Aubin-2}) obtained the constant $c_{n,0}$ when $n\geq 4$, the potential $c_{n,0}\hbox{Scal}_g$ 
being such that the Yamabe equation is conformally invariant. When $s\in (0,2)$, the critical equation enjoys no suitable conformal invariance due to the singular term $d_g(\cdot, x_0)^{-s}$, 
and, despite our existence result involves the scalar curvature, one gets another constant $c_{n,s}$.

\medskip\noindent It is also to notice that, unlike the case $s=0$, the solutions to equations like \eqref{art-1-H-S-eqt-1} are not $C^2$. This lead us to handle with care issues related to the maximum principle, 
for which we develop a suitable approach. As in Aubin, the minimization approach leads to computing some test-function estimates. However, unlike the case $s=0$, 
the terms involved in the expansion of the functional are not explicit and we need to collect them suitably to obtain the explicit value of $c_{n,s}$ above.

%\medskip\noindent A very last remark is that in the case of a smooth bounded domain of $\mathbb{R}^n$, 
%we recover the above-mentioned result of Ghoussoub-Yuan \cite{GH-Yuan} for $n\geq 4$, and we are able to extend it to dimension $n=3$.
% Namely, as a direct consequence of our analysis, we get the following\footnote{to check. formulation � reprendre. voir faire plus leger}:

%\begin{theorem} Let $\Omega\subset\mathbb{R}^3$ be a smooth domain, and we let $x_0\in\Omega$. We let $a\in C^0(\overline{\Omega})$ be such that $\Delta+a$ is coercive. 
%We assume that $R(x_0,x_0)>0$ where $R(x,y)=G_{x}(y)-\omega_2^{-1}|x-y|^{-1}$ is the Robin function. Then there exists $u\in C^{0,\theta}(\overline{\Omega})\cap H_{1,0}^2(\Omega)$ for all $\theta\in (0,\min\{1, 2-s\})$ 
%that is a distributional positive solution to 
%$$\left\{\begin{array}{ll}
%\Delta u+a u=\frac{u^{\crit-1}}{|x-x_0|^s}&\hbox{ in }\Omega\\
%u>0 &\hbox{ in }\Omega\\
%u=0 &\hbox{ on }\partial\Omega
%\end{array}\right\}$$
%\end{theorem}

\medskip\noindent The proof of Theorem \ref{art-H-S-th-1} uses the best constant in the Hardy-Sobolev inequality. It follows from the Hardy-Sobolev embedding that there exist $A,B>0$ such that
\begin{equation}\label{art-H-S-eqt-3}
\left(\int_M \frac{|u|^{\crit}}{d_g(x,x_0)^s}\, dv_g\right)^{\frac{2}{\crit}} \leq A\int_M |\nabla u|_g^2\, dv_g+ B\int_M u^2\, dv_g
\end{equation}
for all $u\in H_1^2(M)$. We let $A_0(M,g,s)$ be the best first constant of the Riemannian Hardy-Sobolev inequality, that is
\begin{equation}\label{art-H-S-eqt-4} 
A_0(M,g,s):= \inf\{A > 0; \eqref{art-H-S-eqt-3} \hbox{ holds for all }u\in H_1^2(M)\}.
\end{equation}
We prove the following:

\begin{theorem}\label{art-H-S-th-2} Let $(M,g)$ be a compact Riemannian Manifold of dimension $n \geq 3$,  
$x_0 \in M$, $s \in (0,2)$ and  $2^\star(s) = \frac{2(n-s)}{n-2}$. Then 
$$A_0(M,g,s) = K(n,s),$$
where $K(n,s)$ is the optimal constant of the Euclidean Hardy-Sobolev inequality, that is
\begin{equation}\label{art-H-S-th-1-eqt-1}
K(n,s)^{-1} := \inf_{\varphi \in C_c^\infty(\R^n)\setminus\{0\}}
\frac{\int_{\R^n}|\nabla\varphi|^2 dX}
{\left(\int_{\R^n}\frac{|\varphi|^{2^\star(s)}}{|X|^s}dX\right)^{\frac{2}{\crit}}}
\end{equation}
\end{theorem}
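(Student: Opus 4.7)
\medskip\noindent My plan is to establish the two inequalities $A_0(M,g,s)\geq K(n,s)$ and $A_0(M,g,s)\leq K(n,s)$ separately: the first by a concentration test-function argument at $x_0$, the second by a covering plus partition of unity that transplants the Euclidean Hardy-Sobolev inequality \eqref{art-H-S-th-1-eqt-1} into a small normal chart at $x_0$.

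\medskip\noindent For the lower bound I would fix $\delta>0$, pick $U\in C_c^\infty(\R^n)\setminus\{0\}$ whose Rayleigh quotient in \eqref{art-H-S-th-1-eqt-1} is within $\delta$ of $K(n,s)^{-1}$, and rescale $U_\epsilon(X):=\epsilon^{-(n-2)/2}U(X/\epsilon)$, so that both $\int_{\R^n}|\nabla U_\epsilon|^2\,dX$ and $\int_{\R^n}|X|^{-s}U_\epsilon^{\crit}\,dX$ are independent of $\epsilon$. Transplant by setting $u_\epsilon(x):=U_\epsilon(\exp_{x_0}^{-1}(x))$ on a normal chart around $x_0$, extended by $0$ elsewhere; this is well-defined for $\epsilon$ small since $\mathrm{supp}\,U$ is compact. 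In normal coordinates at $x_0$ one has $d_g(x,x_0)=|X|$ and $g_{ij}=\delta_{ij}+O(|X|^2)$, so
$$\int_M|\nabla u_\epsilon|_g^2\,dv_g\longrightarrow\int_{\R^n}|\nabla U|^2\,dX,\qquad \int_M\frac{u_\epsilon^{\crit}}{d_g(x,x_0)^s}\,dv_g\longrightarrow\int_{\R^n}\frac{U^{\crit}}{|X|^s}\,dX,$$
while a direct change of variable yields $\int_M u_\epsilon^2\,dv_g=O(\epsilon^2)\to 0$. Plugging $u_\epsilon$ into \eqref{art-H-S-eqt-3} and letting first $\epsilon\to 0$ then $\delta\to 0$ forces $A\geq K(n,s)$, hence $A_0(M,g,s)\geq K(n,s)$.

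\medskip\noindent For the upper bound I would fix $\sigma>0$ and $r>0$ small, cover $M$ by finitely many geodesic balls $B_0:=B(x_0,r),B_1,\dots,B_N$, and pick smooth $\eta_k\geq 0$ with $\mathrm{supp}(\eta_k)\subset B_k$ and $\sum_k\eta_k^{\crit}\equiv 1$. This choice is tailored to the integrand, since $|u|^{\crit}=\sum_k|\eta_k u|^{\crit}$, and the subadditivity $(\sum a_k)^{2/\crit}\leq\sum a_k^{2/\crit}$ (valid since $\crit>2$) decouples the estimate. On $B_0$, working in normal coordinates, $d_g(\cdot,x_0)^{-s}=|X|^{-s}$, and \eqref{art-H-S-th-1-eqt-1} applied to $\eta_0 u$ yields a coefficient $K(n,s)+O(r^2)$ in front of $\int_M|\nabla(\eta_0 u)|_g^2\,dv_g$. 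On each $B_k$ with $k\geq 1$, the weight $d_g(\cdot,x_0)^{-s}$ is bounded; since $\crit<\tfrac{2n}{n-2}$, a H\"older inequality followed by the Aubin sharp Sobolev inequality on $(M,g)$ produces a coefficient of order $\mathrm{vol}(B_k)^{\alpha}$, with $\alpha:=\frac{s(n-2)}{n(n-s)}>0$, which tends to $0$ as $r\to 0$. Expanding $|\nabla(\eta_k u)|_g^2\leq(1+\mu)\eta_k^2|\nabla u|_g^2+C_\mu|\nabla\eta_k|_g^2\,u^2$ and summing, the resulting pointwise coefficient of $|\nabla u|_g^2$ can be made smaller than $K(n,s)+\sigma$ by taking $r$ and $\mu$ sufficiently small; the remaining $u^2$ terms are absorbed in $B_\sigma\int_M u^2\,dv_g$, which yields $A_0(M,g,s)\leq K(n,s)$.

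\medskip\noindent The main obstacle is capturing the sharp constant on $B_0$: this relies on the exact pointwise identity $d_g(\cdot,x_0)^{-s}=|X|^{-s}$ in normal coordinates together with the quantitative approximations $\sqrt{|g|}=1+O(r^2)$ and $g^{ij}=\delta_{ij}+O(r^2)$, so that \eqref{art-H-S-th-1-eqt-1} can be imported with multiplicative error $1+O(r^2)$. The other balls contribute subcritical (hence harmless) terms that can be absorbed once $r$ is chosen small, but it is essential that their coefficient of $|\nabla u|_g^2$ be controlled pointwise rather than merely in average, so that the sharp value $K(n,s)$ coming from $B_0$ is not polluted.
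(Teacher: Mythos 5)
Your proposal is correct and follows essentially the same route as the paper: the upper bound $A_0(M,g,s)\le K(n,s)$ is the paper's Proposition \ref{art-H-S-prop-1} (cover $M$ by small geodesic balls, import the Euclidean inequality \eqref{art-H-S-th-1-eqt-1} in the exponential chart at $x_0$ with a $1+O(r^2)$ loss, treat the balls away from $x_0$ by H\"older plus the Sobolev inequality since there the weight is bounded and the exponent subcritical, and absorb the partition-of-unity cross terms into the $\int_M u^2\,dv_g$ term), while the lower bound is the paper's rescaled test-function argument concentrating at $x_0$ in a normal chart. The only differences are cosmetic --- you normalize the partition by $\sum_k\eta_k^{2^\star(s)}\equiv 1$ and use subadditivity of $t\mapsto t^{2/2^\star(s)}$ where the paper uses $\sum_m\eta_m\equiv 1$ and the triangle inequality in $L^{2^\star(s)/2}(M,d_g(\cdot,x_0)^{-s})$, and you pre-normalize the rescaling so the critical integrals are $\epsilon$-independent --- up to the usual care (present but implicit in the paper as well, via the constant $\lambda_0=r_0/2-r_m$) that the far balls must have radii small compared with their distance to $x_0$ so that their coefficient is genuinely small, and that the covering has bounded multiplicity so your pointwise bound on the coefficient of $|\nabla u|_g^2$ is legitimate.
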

\noindent Theorem \ref{art-H-S-th-2} was proved by Aubin \cite{Aubin-1} for the case $s=0$. The value of $K(n,s)$ is 
$$K(n,s) = \left[(n-2)(n-s)\right]^{-1}\left(\frac{1}{2-s}\omega_{n-1}\frac{\Gamma^2(n-s/2-s)}{\Gamma(2(n-s)/2-s)}\right)^{-\frac{2-s}{n-s}}.$$
It was computed independently by Aubin \cite{Aubin-1}, Rodemich \cite{Rodemich}  and 
Talenti \cite{Talenti} for the case $s=0$, and the value for $s\in (0,2)$ has been computed by Lieb (see \cite{Lieb-1}, Theorem 4.3). 

\medskip\noindent A natural question is to know whether the infimum $A_0(M,g,s)$ is achieved or not, that is if there exists $B>0$ such that equality \eqref{art-H-S-eqt-3} 
holds for all $u\in H_1^2(M)$ with $A = K(n,s)$. The answer is positive: this is the object of the work \cite{jaber:best:constant}.

\medskip\noindent A very last remark is that Theorem \ref{art-H-S-th-1} holds when $M$ is a compact manifold with boundary provided $x_0$ lies in the interior. In particular, we extend Ghoussoub-Yuan's \cite{GH-Yuan}
 result to dimension $n=3$:
\begin{theorem}\label{art-H-S-th-Dim3} Let $\Omega$ be a smooth bounded domain of $\mathbb{R}^3$ and let $x_0\in\Omega$ be an interior point. For $a\in C^0(\overline{\Omega})$ such that $\Delta+a$ is coercive, 
we define the Robin function as $R(x,y):=\omega_2^{-1}|x-y|^{-1}-G_{x}(y)$ where $G$ is the Green's function for $\Delta+a$ with Dirichlet boundary condition. We assume that $R(x_0,x_0)<0$. 
Then there exists a function $u\in C^{0,\theta}(\overline{\Omega})$ for all $\theta\in (0,\min\{1, 2-s\})$ to the Hardy-Sobolev equation 
$$\Delta u+a(x) u=\frac{u^{\crit-1}}{|x-x_0|^s}\, , u>0\, \; \hbox{ in }\Omega\hbox{ and }u=0\hbox{ on }\partial\Omega.$$
\end{theorem}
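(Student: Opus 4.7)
The plan is to imitate the dimension-three argument behind Theorem \ref{art-H-S-th-1}, replacing the Riemannian mass $m(x_0)$ by the quantity $-R(x_0,x_0)$ and working in $H_0^1(\Omega)$ with the Dirichlet Green's function $G_{x_0}$ for $\Delta+a$. I would introduce the Dirichlet analogue of $J$,
\[
J_\Omega(u) := \frac{\int_\Omega (|\nabla u|^2 + a u^2)\,dx}{\left(\int_\Omega |u|^{\crit}|x-x_0|^{-s}\,dx\right)^{2/\crit}},\quad u\in H_0^1(\Omega)\setminus\{0\},
\]
and set $\mu_a(\Omega,s):=\inf J_\Omega$. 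A Brezis-Nirenberg/concentration-compactness analysis, parallel to the one justifying Theorem \ref{art-H-S-th-1} and relying on Theorem \ref{art-H-S-th-2}, shows that every minimizing sequence for $J_\Omega$ is precompact in $H_0^1(\Omega)$ as soon as $\mu_a(\Omega,s) < K(3,s)^{-1}$. The whole point is therefore to produce a test function witnessing this strict inequality.

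Following Schoen's global test-function construction, I fix $r_0>0$ with $B_{2r_0}(x_0)\subset\Omega$, a smooth cutoff $\eta$ equal to $1$ on $B_{r_0}(x_0)$ and supported in $B_{2r_0}(x_0)$, and the Euclidean Hardy-Sobolev extremal $U_\epsilon$ centered at $x_0$ (the profile achieving \eqref{art-H-S-th-1-eqt-1} in $n=3$, normalized so that $U_\epsilon(x)\sim \epsilon^{1/2}\omega_2^{-1}|x-x_0|^{-1}$ far from $x_0$ at the relevant scale). The candidate is
\[
u_\epsilon(x) := \eta(x)\bigl(U_\epsilon(x) - \omega_2^{-1}\epsilon^{1/2}|x-x_0|^{-1}\bigr) + \epsilon^{1/2}\,G_{x_0}(x),
\]
which lies in $H_0^1(\Omega)$ since $G_{x_0}$ vanishes on $\partial\Omega$. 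The decomposition $G_{x_0}(x)=\omega_2^{-1}|x-x_0|^{-1}-R(x_0,x_0)+o(1)$ as $x\to x_0$ is the mechanism that forces the Robin value to enter the expansion.

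Next I would expand $J_\Omega(u_\epsilon)$ by splitting all integrals into $B_{r_0}(x_0)$ and $\Omega\setminus B_{r_0}(x_0)$. The inner contribution reproduces the Euclidean quotient and yields $K(3,s)^{-1}$ at leading order; the cross terms between $U_\epsilon$ and the regular part of $G_{x_0}$ are handled via Green's identity, using that $(\Delta+a)G_{x_0}=\delta_{x_0}$ in $\Omega$ with zero boundary data, together with the explicit three-dimensional asymptotics of $U_\epsilon$ and the fact that the weight $|x-x_0|^{-s}$ is integrable near $x_0$ for $s<2$. Collecting all terms the goal is an expansion
\[
J_\Omega(u_\epsilon) = K(3,s)^{-1} + c_s\,R(x_0,x_0)\,\epsilon + o(\epsilon)
\]
with $c_s>0$ depending only on $s\in(0,2)$. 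The hypothesis $R(x_0,x_0)<0$ then gives $\mu_a(\Omega,s)<K(3,s)^{-1}$.

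Once a nonnegative minimizer $u$ is extracted, I would invoke the Moser/Trudinger-type regularity for equations with weight $|x-x_0|^{-s}$ developed earlier in the paper to obtain $u\in C^{0,\theta}(\overline\Omega)$ for $\theta\in(0,\min\{1,2-s\})$, and apply the strong maximum principle away from $x_0$, then extend positivity up to $x_0$ via the integral representation, to get $u>0$ in $\Omega$. I expect the main obstacle to be the test-function expansion: carefully isolating which error terms are genuinely $o(\epsilon)$ uniformly in $s\in(0,2)$, tracking the cancellations between the truncated Euclidean tail $\omega_2^{-1}\epsilon^{1/2}|x-x_0|^{-1}$ and the singular part of $G_{x_0}$, and verifying that the coefficient of $R(x_0,x_0)\epsilon$ has the expected positive sign; this is where the proof departs from the classical Yamabe case $s=0$ since the cross integrals now carry the weight $|x-x_0|^{-s}$.
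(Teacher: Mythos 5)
Your proposal is correct and is essentially the paper's own argument: the paper deduces Theorem \ref{art-H-S-th-Dim3} by observing that the general existence theorem (Theorem \ref{art-H-S-th-3}) and the three-dimensional test-function expansion \eqref{art-H-S-fct-test-p2-eqt-24} carry over verbatim to a domain with Dirichlet condition and interior singularity, where the mass $\beta_{x_0}(x_0)$ equals $-\omega_2 R(x_0,x_0)$, so $R(x_0,x_0)<0$ yields the strict inequality $\inf J<K(3,s)^{-1}$. Your test function $\eta\bigl(U_\epsilon-\omega_2^{-1}\sqrt{\epsilon}\,|x-x_0|^{-1}\bigr)+\sqrt{\epsilon}\,G_{x_0}$ is, up to normalization, exactly the paper's $v_\epsilon=\eta u_\epsilon+\sqrt{\epsilon}\,\beta_{x_0}$, and your claimed expansion with a positive coefficient on $R(x_0,x_0)\,\epsilon$ agrees in sign and order with \eqref{art-H-S-fct-test-p2-eqt-24}.
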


\medskip\noindent This paper is organized as follows. In Section 1, we prove Theorem \ref{art-H-S-th-2}. In Section 2, we prove a general existence theorem for solutions to equation (\ref{art-1-H-S-eqt-1}). In Section 3, 
we compute the full expansion of the functional $J$ taken at the relevant test-functions for dimension $n\geq 4$. In Section 4, we perform the test-functions estimate for the specific dimension $n=3$ and prove 
Theorems \ref{art-H-S-th-1}  and \ref{art-H-S-th-Dim3}. 

\medskip\noindent After this work was completed, we learned that Thiam \cite{Elhadji-A.T.} has independently studied similar issues.  
\\ \noindent{\bf Acknowledgments:} The author sincerely thanks Prof. Fr\'ed\'eric Robert for helpful discussions, suggestions and remarks.  

\section{The best constant in the Hardy-Sobolev inequality}

In this section, we will prove Theorem \ref{art-H-S-th-2}. For that, we begin by the following proposition : 

\begin{proposition}\label{art-H-S-prop-1} Let $(M,g)$ be a compact Riemannian Manifold of dimension $n \geq 3$,  
$x_0 \in M$, $s \in (0,2)$. For any $\epsilon > 0$, there exists $B_\epsilon > 0$ such that
\begin{equation}\label{art-H-S-prop-1-eqt-1}
\left(\int_M\frac{|u|^{2^\star(s)}}{d_g(x,x_0)^s}dv_g\right)^{\frac{2}{2^\star(s)}}
 \leq (K(n,s) + \epsilon)\int_M |\nabla u|_g^2 dv_g + B_\epsilon \int_Mu^2dv_g.
\end{equation}
for all $u \in H_1^2(M)$.  
\end{proposition}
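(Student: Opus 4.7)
The plan is to localize the singular part of the inequality at $x_0$ and treat the rest as a lower-order perturbation. Fix $\epsilon>0$. I would choose $\delta>0$ small enough that in geodesic normal coordinates at $x_0$ the metric is $\tau$-close to the Euclidean metric and $|\sqrt{|g|}-1|\leq\tau$ on the ball of radius $\delta$, with $\tau$ to be chosen small; in these coordinates the Riemannian distance to $x_0$ coincides with $|X|$, so the singular weight transfers without error. I would then pick a cutoff $\eta\in C_c^\infty(B_\delta(x_0))$ with $\eta\equiv 1$ on $B_{\delta/2}(x_0)$ and split $u=\eta u+(1-\eta)u$.

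For the near-$x_0$ piece $\eta u$: identifying it with a function $\phi$ on $\R^n$ via the normal chart (extended by zero) and applying the Euclidean Hardy--Sobolev inequality \eqref{art-H-S-th-1-eqt-1}, the chart comparisons yield
$$\left(\int_M\frac{|\eta u|^{\crit}}{d_g(x,x_0)^s}\,dv_g\right)^{\frac{2}{\crit}}\leq(1+O(\tau))\,K(n,s)\int_M|\nabla(\eta u)|_g^2\,dv_g.$$
Expanding $|\nabla(\eta u)|_g^2\leq(1+\mu)\eta^2|\nabla u|_g^2+C_\mu u^2|\nabla\eta|_g^2$ via Young's inequality and then taking $\delta,\tau,\mu$ sufficiently small, this piece contributes at most $(K(n,s)+\epsilon/3)\int_M|\nabla u|_g^2\,dv_g+C\int_Mu^2\,dv_g$.

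For the far piece $(1-\eta)u$: the weight $d_g(x,x_0)^{-s}$ is bounded by $(\delta/2)^{-s}$ on its support, so
$$\left(\int_M\frac{|(1-\eta)u|^{\crit}}{d_g(x,x_0)^s}\,dv_g\right)^{\frac{2}{\crit}}\leq(\delta/2)^{-\frac{2s}{\crit}}\left(\int_M|u|^{\crit}\,dv_g\right)^{\frac{2}{\crit}}.$$
The crucial observation is that $\crit<2n/(n-2)$ since $s>0$, so Rellich--Kondrachov gives a compact embedding $H_1^2(M)\hookrightarrow L^{\crit}(M)$. A standard consequence is that for every $\epsilon'>0$ there is $C_{\epsilon'}$ with
$$\left(\int_M|u|^{\crit}\,dv_g\right)^{\frac{2}{\crit}}\leq\epsilon'\int_M|\nabla u|_g^2\,dv_g+C_{\epsilon'}\int_Mu^2\,dv_g,$$
so choosing $\epsilon'$ small enough, this piece contributes at most $(\epsilon/3)\int_M|\nabla u|_g^2\,dv_g+C'\int_Mu^2\,dv_g$.

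Finally, I would assemble the two pieces via Minkowski's inequality in $L^{\crit}(M,d_g(x,x_0)^{-s}dv_g)$ followed by the elementary Young bound $(a+b)^2\leq(1+\nu)a^2+(1+\nu^{-1})b^2$, choosing $\nu$ small so that the factor $(1+\nu)$ is absorbed into the remaining $\epsilon/3$ slack. The main obstacle is the calibration of the coefficient of $\int_M|\nabla u|_g^2\,dv_g$: a naive Sobolev bound on the far piece would contribute an $O(1)$ constant there and destroy the sharp leading constant. What saves the argument is precisely that $s>0$ forces $\crit<2^\star(0)$, making the unweighted embedding $H_1^2(M)\hookrightarrow L^{\crit}(M)$ compact, so the far-piece contribution to the gradient coefficient can be made as small as desired.
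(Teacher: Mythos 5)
Your argument is correct, but it follows a genuinely different route from the paper. The paper runs Aubin's classical scheme: it covers $M$ by finitely many small geodesic balls, builds a partition of unity $(\eta_m)$ with $\eta_m^{1/2}\in C^1(M)$, proves the local inequality with constant $K(n,s)+\epsilon/2$ on \emph{every} ball (via the Euclidean Hardy--Sobolev inequality on the ball containing $x_0$, and via H\"older plus the Sobolev embedding with a small-volume factor on the balls away from $x_0$), and then sums over the partition, the cross terms being absorbed by Cauchy--Schwarz into the $\|u\|_2^2$ term. You instead use a single cutoff at $x_0$: the near piece is treated exactly as in the paper's Case 2.1, while the whole far region is handled globally by noting that the weight is bounded there and that $2^\star(s)<2^\star$ for $s>0$, so the compact embedding $H_1^2(M)\hookrightarrow L^{\crit}(M)$ (Ehrling/interpolation) makes the far piece's gradient coefficient arbitrarily small. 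This buys a shorter proof that avoids the covering and the $\eta_m=\alpha_m^3/\sum\alpha_i^3$ partition-of-unity bookkeeping; the paper's version is more self-contained in the Aubin tradition and parallels the $s=0$ proof patch by patch. One small bookkeeping point in your assembly: after Minkowski and the Young bound $(a+b)^2\leq(1+\nu)a^2+(1+\nu^{-1})b^2$, the far piece is multiplied by $(1+\nu^{-1})$, so you should fix $\delta$, $\mu$, $\nu$ first and only then choose $\epsilon'$ (depending on $\nu$ and $\delta$) so that $(1+\nu^{-1})(\delta/2)^{-2s/\crit}\epsilon'\leq\epsilon/3$; with that ordering of choices the calibration closes as you intend.
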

Thiam \cite{Elhadji-A.T.} proved a result in the same spirit with addition of an extra remainder term.
The case $s=0$ has been proved by Aubin \cite{Aubin-1}  (see also \cite{Hebey-1}, \cite{Hebey-2} for an exposition in book form). We adapt this proof to our case.

\begin{proof}\par

\noindent{\bf Step 1: Covering of $M$ by geodesic balls.}
For any $x\in M$, we denote as $\hbox{exp}_x$ the exponential map at $x$ with respect to the metric $g$. In the sequel, for any $r>0$ and $z\in M$, $\B_{r}(z)\subset M$ 
denotes the ball of center $0$ and of radius $r$ for the Riemannian distance $d_g$.  For any $x\in M$ and any $\rho > 0$, there exist $r = r(x,\rho)\in (0, i_g(M)/2), \ \lim_{\rho\to 0} r(x,\rho) = 0$ 
(here, $i_g(M)$ denotes the injectivity radius of $(M,g)$) such that the exponential chart $(\B_{2r}(x), \hbox{exp}_{x}^{-1})$ satisfies the following properties: 
on $\B_{2r}(x)$, we have that 
\begin{eqnarray*}
&&(1-\rho)\delta \leq   g \leq (1+\rho)\delta,\\
&&(1-\rho)^{\frac{n}{2}}dx \leq dv_{g}   \leq(1+\rho)^{\frac{n}{2}}dx,\\
&&D_\rho^{-1}|T|_\delta \leq |T|_g \leq D_\rho|T|_\delta, \hbox{ for all } T \in \chi(T^\star M)
\end{eqnarray*}
where $\lim_{\rho\rightarrow+\infty}D_\rho = 1$, $\chi(T^\star M)$ denotes the space of $1-$covariant tensor fields on $M$, $\delta$ is the Euclidean metric on $\R^n$, that is
 the standard scalar product on $\R^n$, and we have assimilated $g$ to the local metric $(\exp_{x_0})^*g$ on $\R^n$ via the exponential map.

\medskip\noindent It follows from the compactness of $M$ that there exists $N\in \N$ (depending on $\rho$) and $x_1,...,x_N\in M$ (depending on $\rho$) such that $$M\setminus\B_{\frac{r_0}{2}}(x_0)\subset \cup_{m=1}^N \B_{r_m}(x_m),$$
where $r_0 = r(x_0, \rho)$ and $r_m = r(x_m, \rho)$.

\medskip\noindent{\bf Step 2:}  We claim that for all $\epsilon > 0$ there exists $\rho_0 = \rho_0(\epsilon) > 0$ such that $\lim_{\epsilon\to0}\rho_0(\epsilon) = 0$ 
and for all $\rho \in (0, \rho_0) $, all $m \in \{0,\ldots,N-1\}$ and all $u \in C_c^\infty(\B_{r_m}(x_m))$, we have that : 
\begin{equation}\label{est:hs:1}
\left(\int_{M}\frac{|u|^{2^\star(s)}}{d_g(x,x_0)^s}dv_g\right)^{\frac{2}{2^\star(s)}} \leq \left(K(n,s) + \frac{\epsilon}{2}\right)\int_{M}|\nabla u|_g^2dv_g.
\end{equation}
Indeed, it follows from \eqref{art-H-S-th-1-eqt-1} that for all $\varphi \in C_c^\infty(\R^n) $ :  
\begin{equation}\label{art-H-S-th-1-eqt-2.1}
\left(\int_{\R^n}\frac{|\varphi|^{2^\star(s)}}{|X|_\delta^s}dX\right)^{\frac{2}{2^\star(s)}}\leq K(n,s)\int_{\R^n}|\nabla\varphi|_{\delta}^2 dX.
\end{equation}
We consider $\rho > 0$, $m \in \{0,\ldots,N\}$ and $u \in C_c^\infty(\B_{r_m}(x_m))$ such that $(\B_{r_m}(x_m), \hbox{exp}_{x_m}^{-1})$ is an exponential card as in Step 1. We distinguish two cases : 
\medskip\noindent 
\\ {\bf Case 2.1 :} If $m = 0$ then using the properties of the exponential card $(\B_{r_0}(x_0), \hbox{exp}_{x_0}^{-1})$, developed in Step 1, and the Euclidean Hardy-Sobolev inequality \eqref{art-H-S-th-1-eqt-2.1}, 
we write 
\begin{eqnarray*}
  \left(\int_M\frac{|u|^{2^\star(s)}}{d_g(x,x_0)^s}dv_g\right)^{\frac{2}{2^\star(s)}}
&\leq& (1+\rho)^{\frac{n}{2^\star(s)}}K(n,s)\int_{\R^n}|\nabla(u\circ\exp_{x_m})|_\delta^2dX
\\ &\leq& D_\rho^2(1+\rho)^{\frac{n}{2^\star(s)}}(1-\rho)^{\frac{-n}{2}}K(n,s)\int_M|\nabla u|_g^2 dv_g.
\end{eqnarray*}
%Since $\lim_{\rho\to 0}D_\rho^2\frac{(1+\rho)^{\frac{n}{2^\star(s)}}}{(1-\rho)^{\frac{n}{2}}}=1$, then for all $\epsilon > 0$, there exists $\rho_0 > 0$ s.t. 
%for all $0<\rho < \rho_0$, we have that
%\begin{equation}\label{art-H-S-th-1-eqt-2.2}
%\left(\int_{\B_{r_m}(x_m)}\frac{|u|^{2^\star(s)}}{d_g(x,x_0)^s}dv_g\right)^{\frac{2}{2^\star(s)}} \leq \left(K(n,s) + \frac{\epsilon}{2}\right)\int_{\B_{r_m}(x_m)}|\nabla u|_g^2dv_g.
%\end{equation}
Letting $\rho \to 0$, we get \eqref{est:hs:1}, for all $u \in C_c^\infty(\B_{r_0}(x_0))$, when $m=0$. This proves \eqref{est:hs:1} in the Case 2.1.  

\medskip\noindent{\bf Case 2.2 :} If $m \in  \{1,\ldots,N-1\}$ then for all $x \in \B_{r_m}(x_m)$, we have :
$$d_g(x,x_0) \geq \lambda_0 > 0, $$
with $\lambda_0 = \frac{r_0}{2} - r_m$. Thanks to H\"older inequality and Gagliardo-Nirenberg-Sobolev inequality, we can write that :   
 \begin{eqnarray*}
  \left(\int_M\frac{|u|^{2^\star(s)}}{d_g(x,x_0)^s}dv_g\right)^{\frac{2}{2^\star(s)}} % &\leq& \frac{1}{\lambda_0^{\frac{2s}{\crit}}}\left(\int_{\B_{r_m}(x_m)}|u|^{2^\star(s)}\right)^{\frac{2}{2^\star(s)}}
 &\leq& \frac{vol(\B_{r_m}(x_m))^{2\left(\frac{1}{2^\star(s)} - \frac{1}{2^\star}\right)}}{\lambda_0^{\frac{2s}{\crit}}}\left(\int_{\B_{r_m}(x_m)}|u|^{2^\star}dv_g\right)^{\frac{2}{2^\star}}
\\ &\leq& Q'_\rho\int_M|\nabla u|_g^2 dv_g,
\end{eqnarray*}
where $\lim_{\rho\to 0}Q'_\rho =0$ and $2^\star:=2n/(n-2)$ is the Sobolev exponent.
%Then for all $\epsilon > 0$,  there exists $\rho_m > 0$ et $\rho_1<XXX$ such that for all $\rho \in ( 0,\rho_1)$
%\begin{equation}\label{art-H-S-th-1-eqt-2.3}
%\left(\int_{\B_{r_m}(x_m)}\frac{|u|^{2^\star(s)}}{d_g(x,x_0)^s}dv_g\right)^{\frac{2}{2^\star(s)}} \leq \frac{\epsilon}{2}\int_{\B_{r_m}(x_m)}|\nabla u|_g^2dv_g\hbox{ for all }u\in XXX.
%\end{equation}
Letting $\rho \to 0$, we get \eqref{est:hs:1}, for all $u \in C_c^\infty(\B_{r_m}(x_m))$, when $m \geq 1$. This ends Step 2.    

%\medskip\noindent Inequalities \eqref{art-H-S-th-1-eqt-2.2} and \eqref{art-H-S-th-1-eqt-2.3} prove \eqref{est:hs:1}. 
%This ends Step 2.

\medskip\noindent{\bf Step 3:} We fix $\epsilon>0$, $\rho \in (0, \rho_0(\epsilon))$ and $x_1, \ldots, x_N$ as in Step 1 and Step 2.  We consider now $(\alpha_m)_{m=0,\ldots,N-1}$ a $C^\infty$-partition of unity subordinate to
 the covering $(\B_{r_m}(x_m))_{m=0,\ldots,N-1}$ of $M$ and
 define, for all $m= 0 ,\ldots, N-1$, a function $\eta_m$ on $M$ by  $$\eta_m = \frac{\alpha_m^3}{\sum_{i=0}^{N-1}\alpha_i^3}. $$
We can see easily that $(\eta_m)_{m=0, \ldots, N-1}$ is a  $C^\infty$-partition of unity subordinate to the covering $(\B_{r_m}(x_m))_{m=0,\ldots,N-1}$ of $M$ s.t. 
$\eta_m^{\frac{1}{2}} \in C^1(M)$, for every $m = 0, \ldots, N-1$. We let $H > 0$ satisfying for each $m = 0, \ldots, N-1$ : 
\begin{equation}\label{prop-1-eqt-step3}
|\nabla\eta_m^{\frac{1}{2}}|_g \leq H.
\end{equation}

\medskip\noindent{\bf Step 4:} In this step, we will prove the Hardy-Sobolev inequality on $C^\infty(M)$. Indeed, we let $\epsilon > 0$ and $(\eta_m)_{m=0, \ldots, N-1}$ be a $C^\infty$-partition of unity as 
in Step 3 and consider $u \in C^\infty(M)$. Since $\frac{2^\star(s)}{2} > 1$, we get that : 
\begin{eqnarray*}
 &&\left(\int_M\frac{|u|^{2^\star(s)}}{d_g(x,x_0)^s}dv_g\right)^{\frac{2}{2^\star(s)}}\leq  \left(\int_M\frac{|\sum_{m=0}^{N-1}\eta_m
u^2|^{\frac{2^\star(s)}{2}}}{d_g(x,x_0)^s}dv_g\right)^{\frac{2}{2^\star(s)}}\\
&&\leq
\left\Vert\sum_{m=0}^{N-1}\eta_mu^2\right\Vert_{L^{\frac{2^\star(s)}{2}}(M,d_g(x,x_0)^{-s})}
\leq
\sum_{m=0}^{N-1}\left\Vert\eta_mu^2\right\Vert_{L^{\frac{2^\star(s)}{2}}(M,d_g(x,x_0)^{-s})}
 \\ &&\leq \sum_{m=0}^{N-1}\left(\int_M\frac{|\eta_m^{\frac{1}{2}}
u|^{2^\star(s)}}{d_g(x,x_0)^s}dv_g\right)^{\frac{2}{2^\star(s)}}.
 \end{eqnarray*}
Using inequality \eqref{est:hs:1} in Step 2 and by density ($\eta_m^{\frac{1}{2}}u \in C^1(M)$), we get that
 $$ \left(\int_M\frac{|\eta_m^{\frac{1}{2}}u|^{2^\star(s)}}{d_g(x,x_0)^s}dv_g\right)^{\frac{2}{2^\star(s)}}
   \leq  (K(n,s) + \frac{\epsilon}{2})\int_M|\nabla(\eta_m^{\frac{1}{2}}u)|_g^2dv_g. $$
Hence  
\begin{eqnarray*}
   \left(\int_M\frac{|u|^{2^\star(s)}}{d_g(x,x_0)^s}dv_g\right)^{\frac{2}{2^\star(s)}}
% &\leq& (K(n,s) +
% \frac{\epsilon}{2})\sum_{m=0}^{N-1}\int_M|\nabla(\eta_m^{\frac{1}{2}}u)|_g^2 dv_g
% \\ &=& (K(n,s) + \frac{\epsilon}{2})\sum_{m=0}^{N-1}\int_M\left(\eta_m^{\frac{1}{2}}|\nabla u|_g +
%                 |u|\cdot|\nabla \eta_m^{\frac{1}{2}}|_g\right)^2 dv_g
   &\leq& (K(n,s) + \frac{\epsilon}{2})\sum_{m=0}^{N-1}\int_M\left(\eta_m|\nabla u|_g^2 +
                  2\eta_m^{\frac{1}{2}}|\nabla u|_g|u||\nabla \eta_m^{\frac{1}{2}}|_g  \right.
 \\ &&\left. \quad + |u|^2|\nabla\eta_m^{\frac{1}{2}}|_g^2\right)dv_g.
   \end{eqnarray*}
Using the Cauchy-Schwarz inequality and \eqref{prop-1-eqt-step3} from Step 3, we get that: 
\begin{equation}\label{art-H-S-th-1-eqt-4.1}
 \left(\int_M\frac{|u|^{2^\star(s)}}{d_g(x,x_0)^s}dv_g\right)^{\frac{2}{2^\star(s)}}
 \leq (K(n,s) + \frac{\epsilon}{2})\left(\|\nabla u\|_2^2 + 2NH\|\nabla u\|_2\|u\|_2 + NH^2\|u\|_2^2\right).
\end{equation}
We choose now $\epsilon_0 > 0$ s.t. 
\begin{equation}\label{art-H-S-th-1-eqt-4.2}
 (K(n,s) + \frac{\epsilon}{2})(1 + \epsilon_0) \leq  K(n,s) + \epsilon.
\end{equation}
Since 
\begin{equation}\label{art-H-S-th-1-eqt-4.3}
2NH\|\nabla u\|_2\|u\|_2 \leq \epsilon_0\|\nabla u\|_2^2 + \frac{(NH)^2}{\epsilon_0}\|u\|_2^2,
\end{equation}   
then by combining \eqref{art-H-S-th-1-eqt-4.1} with \eqref{art-H-S-th-1-eqt-4.2} and \eqref{art-H-S-th-1-eqt-4.3}, we get that : 
\begin{eqnarray*}\label{art-H-S-th-1-eqt-4.4}
 \left(\int_M\frac{|u|^{2^\star(s)}}{d_g(x,x_0)^s}dv_g\right)^{\frac{2}{2^\star(s)}}
%&\leq& (K(n,s) + \frac{\epsilon}{2})(1 + \epsilon_0)\|\nabla u\|_2^2 + B_\epsilon\|u\|_2^2 \nonumber
 &\leq& (K(n,s) + \epsilon)\int_M|\nabla u|_g^2dv_g + B_\epsilon\int_M|u|^2dv_g,\nonumber
\\ && 
\end{eqnarray*}
where $B_\epsilon =  (\frac{(NH)^2}{\epsilon_0} + NH^2)(K(n,s) + \frac{\epsilon}{2})$. This proves inequality \eqref{art-H-S-prop-1-eqt-1} for functions $u\in C^\infty(M)$. The inequality for $H_1^2(M)$ follows by density. 
This ends the proof of Proposition \ref{art-H-S-prop-1}.
\end{proof}

\medskip\noindent {\bf Proof of Theorem \ref{art-H-S-th-2}}:  We let $A\in \R$ be such that there exists $B>0$ such that inequality \eqref{art-H-S-eqt-3} holds for all $u\in H_1^2(M)$. Therefore, we have that 
\begin{equation}\label{proof-A0-equal-K(n,s)-eqt-1}
\left(\int_M\frac{|u|^{2^\star(s)}}{d_g(x,x_0)^s}dv_g\right)^{\frac{2}{2^\star(s)}}\leq A\int_M|\nabla u|_g^2dv_g + B\int_Mu^2dv_g.
\end{equation} 
We consider $\phi \in C_c^\infty(\R^n)$ such that $Supp\, \phi \subset \B_R(0)$, $R > 0$ and $(\B_{\rho_0}(x_0), \exp_{x_0}^{-1})$ an exponential chart centered at $x_0$ with $\rho_0\in (0, i_g(M))$. For all  $\mu>0$ sufficiently small $(\mu \leq \frac{\rho_0}{R})$, 
we let $\phi_\mu\in C^\infty(\B_{\rho_0}(x_0))$ be such that
$$\phi_\mu(x) = \phi(\mu^{-1}\exp_{x_0}^{-1}(x)) $$
for all $x \in \B_{\rho_0}(x_0)$. Applying, by density,  \eqref{proof-A0-equal-K(n,s)-eqt-1} to $\phi_\mu$, we  write : 
\begin{equation}\label{proof-A0-equal-K(n,s)-eqt-2}
\left(\int_{\B_{\mu R}(x_0)}\frac{|\phi_\mu|^{2^\star(s)}}{d_g(x,x_0)^s}dv_g\right)^{\frac{2}{2^\star(s)}} \leq A\int_{\B_{\mu R}(x_0)}|\nabla\phi_\mu|_g^2dv_g + B\int_{\B_{\mu R}(x_0)}\phi_\mu^2dv_g.
\end{equation} 
For all $\epsilon > 0$, there exists $R_\epsilon > 0$ such that
$$(1-\epsilon)\delta \leq g \leq (1+\epsilon)\delta $$
in $\B_{R_\epsilon}(x_0)$, where $g$ is assimilated to the local metric $(\exp_{x_0})^*g$ on $\R^n$. Then, for all $\mu > 0$ sufficiently small such that $R\mu<R_\epsilon$, we get successively that : 

\begin{equation}\label{proof-A0-equal-K(n,s)-eqt-3}
\int_{\B_{\mu R}(x_0)}\frac{|\phi_\mu|^{2^\star(s)}}{d_g(x,x_0)^s}dv_g \geq  (1-\epsilon)^{\frac{n}{2}}\mu^{n-s}\int_{\B_R(0)}\frac{\phi^{2^\star(s)}(X)}{|X|^s}dX, 
\end{equation}
\begin{equation}\label{proof-A0-equal-K(n,s)-eqt-4}
\int_{\B_{\mu R}(x_0)}|\nabla\phi_\mu|_g^2dv_g \leq (1+\epsilon)^{\frac{n}{2}+1}\mu^{n-2}\int_{\B_R(0)}|\nabla\phi|_\delta^2dX
\end{equation}
and
\begin{equation}\label{proof-A0-equal-K(n,s)-eqt-5}
\int_{\B_{\mu R}(x_0)}\phi_\mu^2dv_g \leq (1+\epsilon)^\frac{n}{2}\mu^n\int_{\B_R(0)}\phi^2dX.
\end{equation} 
Plugging the estimates \eqref{proof-A0-equal-K(n,s)-eqt-3}, \eqref{proof-A0-equal-K(n,s)-eqt-4} and \eqref{proof-A0-equal-K(n,s)-eqt-5} into  \eqref{proof-A0-equal-K(n,s)-eqt-2}, letting $\mu\to 0$ and then $\epsilon\to 0$, we get that
$$\left(\int_{\R^n}\frac{\phi^{2^\star(s)}(X)}{|X|^s}dX\right)^{\frac{2}{2^\star(s)}}  \leq A\int_{\R^n}|\nabla\phi|_\delta^2dX, \ \hbox{ for all }\phi\in C^\infty_c(\R^n).$$
It then follows from the definition of $K(n,s)$ that $A \geq K(n,s)$. Therefore, it follows from the definition of $A_0(M,g,s)$ that $A_0(M,g,s)\geq K(n,s)$. By Proposition \ref{art-H-S-prop-1}, we have that $A_0(M,g,s) \leq K(n,s)$. 
Therefore, $A_0(M,g,s) = K(n,s)$. This proves Theorem \ref{art-H-S-th-2}.
\medskip\noindent
\\ {\bf Remark:} Proposition \ref{art-H-S-prop-1} does not allow to conclude whether $A_0(M,g,s)$ is achieved or not, that is of one can take $\epsilon=0$ in \eqref{art-H-S-prop-1-eqt-1}. Indeed, in our construction, when $\epsilon \to 0$, $r_m \to 0$ 
and then $H \geq |\nabla\eta_m^{\frac{1}{2}}|_g \to +\infty$ (see the proof of Proposition \ref{art-H-S-prop-1}). This implies that $\lim_{\epsilon\to0}B_\epsilon = +\infty$. Proving that $A_0(M,g,s)$ is achieved 
required different techniques and blow-up analysis: this is the object of the article \cite{jaber:best:constant}.

\section{A general existence theorem}

This section is devoted to the proof of the following Theorem:
\begin{theorem}\label{art-H-S-th-3} Let $(M,g)$ be a compact Riemannian Manifold of dimension $n\geq 3$ without boundary. We fix $s\in (0,2)$, $x_0\in M$, and $a\in C^0(M)$ such that $\Delta_g+a$ is coercive. We assume that
\begin{equation}\label{art-1-H-S-th-3-meancdt}
\inf_{u\in H_1^2(M)\setminus \{0\}}J(u)<\frac{1}{K(n,s)}
\end{equation}
Then the infimum of $J$ on $H_1^2(M)\setminus \{0\}$ is achieved by a positive function $u\in H_1^2(M)\cap C^0(M)$. 
Moreover, up to homothety, $u$ is a solution to \eqref{art-1-H-S-eqt-1} and 
$u\in C^{0,\theta}(M) \cap C^{1,\alpha}_{loc}(M\setminus\{x_0\})$ for all $\theta\in (0,\min\{1, 2-s\})$ and $\alpha \in (0,1)$.
\end{theorem}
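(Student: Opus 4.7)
The plan is to follow the classical concentration-compactness strategy for critical variational problems, adapted here to the weighted Hardy-Sobolev setting. Set $\mu := \inf_{H_1^2(M)\setminus\{0\}} J$ and take a minimizing sequence $(u_k)$ normalized by $\int_M |u_k|^{\crit}/d_g(x,x_0)^s\, dv_g = 1$, so that $\int_M(|\nabla u_k|_g^2 + a u_k^2)\, dv_g \to \mu$. Coercivity of $\Delta_g + a$ yields a uniform $H_1^2$-bound on $(u_k)$; after extraction, $u_k \rightharpoonup u$ weakly in $H_1^2(M)$, strongly in $L^2(M)$ by Rellich-Kondrachov, and pointwise almost everywhere.

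I would then perform a Brezis-Lieb decomposition. Setting $v_k := u_k - u$, so that $v_k \rightharpoonup 0$ in $H_1^2$ and $v_k \to 0$ in $L^2$, the standard Brezis-Lieb identity together with its weighted variant (justified by the a.e.\ convergence and the uniform bound in $L^{\crit}(M, d_g(\cdot,x_0)^{-s})$, the weight being locally integrable since $s<2\leq n$) gives
\begin{equation*}
\int_M |\nabla u_k|_g^2\, dv_g = \int_M |\nabla u|_g^2\, dv_g + \int_M |\nabla v_k|_g^2\, dv_g + o(1),
\end{equation*}
\begin{equation*}
\int_M \frac{|u_k|^{\crit}}{d_g(x,x_0)^s}\, dv_g = \int_M \frac{|u|^{\crit}}{d_g(x,x_0)^s}\, dv_g + \int_M \frac{|v_k|^{\crit}}{d_g(x,x_0)^s}\, dv_g + o(1).
\end{equation*}
Set $\alpha := \int_M |u|^{\crit}/d_g(x,x_0)^s\, dv_g$ and $\beta := \lim_k \int_M |v_k|^{\crit}/d_g(x,x_0)^s\, dv_g$, so $\alpha + \beta = 1$. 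Applying Proposition \ref{art-H-S-prop-1} to $v_k$ with small $\epsilon > 0$ and using $\|v_k\|_2 \to 0$ yields $\beta^{2/\crit} \leq (K(n,s) + \epsilon)\, \limsup_k \int_M |\nabla v_k|_g^2\, dv_g$, while $J(u) \geq \mu$ gives $\int_M(|\nabla u|_g^2 + au^2)\, dv_g \geq \mu\, \alpha^{2/\crit}$. Passing to the limit in the energy splitting then produces
\begin{equation*}
\mu \;\geq\; \mu\, \alpha^{2/\crit} + \frac{\beta^{2/\crit}}{K(n,s) + \epsilon}.
\end{equation*}
Since $2/\crit < 1$, the map $t \mapsto t^{2/\crit}$ is superadditive, so $\alpha^{2/\crit} + \beta^{2/\crit} \geq (\alpha+\beta)^{2/\crit} = 1$; combining this with the strict hypothesis \eqref{art-1-H-S-th-3-meancdt} and letting $\epsilon \to 0$ forces $\beta = 0$. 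Hence $\alpha = 1$, $u \not\equiv 0$, and $u$ achieves $\mu$.

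Since $|u|$ is also a minimizer, we may assume $u \geq 0$; after a positive homothety, the Euler-Lagrange equation for $u$ is exactly \eqref{art-1-H-S-eqt-1}. Away from $x_0$ the right-hand side is locally Hölder continuous, so standard elliptic regularity gives $u \in C^{1,\alpha}_{\rm loc}(M \setminus \{x_0\})$, and the strong maximum principle together with coercivity yields $u > 0$ there. To handle a neighborhood of $x_0$, I would run a De Giorgi-Moser iteration in which the singular weight $d_g(\cdot,x_0)^{-s}$ is absorbed via the Hardy-Sobolev inequality of Proposition \ref{art-H-S-prop-1}, upgrading $u$ to $L^\infty$ near $x_0$. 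The global Hölder regularity $u \in C^{0,\theta}(M)$ for $\theta \in (0, \min\{1,2-s\})$ then follows by writing
\begin{equation*}
u(x) = \int_M G_{x_0}(x,y)\, \frac{u(y)^{\crit-1}}{d_g(y,x_0)^s}\, dv_g(y)
\end{equation*}
and controlling the resulting singular convolution; the sharp exponent $2-s$ reflects exactly the decay one can gain against the kernel $d_g^{-s}$. Continuity and $u \not\equiv 0$ then give $u(x_0) > 0$.

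The main obstacle is the regularity and positivity analysis at $x_0$, where the equation is genuinely singular and no $C^2$ theory is available: both the $L^\infty$-bound and the Hölder estimate must be obtained by working directly in the weighted framework. This is precisely where the qualitative difference with the scalar-curvature case $s = 0$ emerges, since for $s > 0$ minimizers are never $C^2$ at the singular point, as already emphasized in the introduction.
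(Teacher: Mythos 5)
Your variational core is correct, and it is a genuinely different route from the paper: you extract a minimizer directly from a minimizing sequence via a weighted Brezis--Lieb decomposition (legitimate here, since $d_g(\cdot,x_0)^{-s}dv_g$ is a finite measure and the sequence converges a.e.), using Proposition \ref{art-H-S-prop-1} on the remainder $v_k$ and the strict bound $\mu<K(n,s)^{-1}$ to kill the concentrated part $\beta$. The paper instead runs the subcritical scheme: minimize $J_q$ for $q<\crit$ (compact embedding), pass $q\to\crit$, and rule out $u\equiv 0$ by the same Proposition \ref{art-H-S-prop-1} plus the strict inequality. Both arguments buy the same thing, and yours is arguably more self-contained; also note a small premature claim on your side: away from $x_0$ the right-hand side is not yet known to be H\"older at that stage (the nonlinearity has critical growth), so even the local $C^{1,\alpha}$ statement requires first the Trudinger/Moser-type bootstrap $u\in L^p(M)$ for all $p$, which is the order the paper follows and which your De Giorgi--Moser step near $x_0$ would anyway provide.

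The genuine gap is the positivity at the singular point. Your final step, ``Continuity and $u\not\equiv 0$ then give $u(x_0)>0$,'' is a non sequitur: a nonnegative continuous minimizer could a priori vanish exactly at $x_0$ (the strong maximum principle and Harnack arguments you invoke only operate on $M\setminus\{x_0\}$, and $u$ is not $C^2$ at $x_0$, so no standard strong maximum principle applies there). This is precisely where the paper works hardest (Step 6): assuming $u(x_0)=0$, one first upgrades regularity to $u\in C^1(M)$ by an iteration on the H\"older exponent in $|f_u|\leq C d_g(x,x_0)^{\alpha(\crit-1)-s}$, observes that $x_0$ is then an interior minimum so $\nabla u(x_0)=0$, and finally runs a Hopf-type boundary point argument on a ball of $M\setminus\{x_0\}$ touching $x_0$, producing a strictly negative directional derivative and hence a contradiction. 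Alternatively, the Green representation you wrote down could close the gap, since
\begin{equation*}
u(x_0)=\int_M G(x_0,y)\,\frac{u(y)^{\crit-1}}{d_g(y,x_0)^s}\,dv_g(y)>0
\end{equation*}
once you prove that the Green's function of the coercive operator $\Delta_g+a$ is positive and justify the representation formula for this merely $C^0\cap H_1^2$ distributional solution with singular right-hand side; but as written you neither state nor prove these facts, so the positivity at $x_0$ remains unestablished.
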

\noindent The existence of a minimizer of $J$ in $H_1^2(M)\setminus\{0\}$ has been proved independently by Thiam \cite{Elhadji-A.T.}.

\medskip\noindent We prove Theorem \ref{art-H-S-th-3} via the classical subcritical approach. For any $q \in (2,\crit]$, we define 
$$J_q(u):=\frac{\int_M(|\nabla u|_g^2+au^2)\, dv_g}{\left(\int_M |u|^{q}\, dv_g\right)^{\frac{2}{q}}}\; ; \; u\in H_1^2(M),$$
and
$$\mathcal{H}_q = \left\{ u \in H_1^2(M) \; ; \; \int_{M}\frac{|u|^q}{d_g(x,x_0)^s}dv_g = 1\right\}. $$
Finally, we define:
$$\lambda_q = \inf_{u\in H_1^2(M)\setminus\{0\}}J_q(u).$$
We fix $q\in (2,\crit)$. Since the embedding $H_1^2(M)\hookrightarrow L^q(M, d_g(\cdot, x_0)^{-s})$ is compact, there exists a minimizer for $\lambda^{(s)}_q$. More precisely,  there exists $u_q \in H_1^2(M)\setminus\{0\} \bigcap \mathcal{H}_q $, $u_q \geq 0$ a.e. such that $u_q$ verifies weakly the subcritical Hardy-Sobolev equation :
$$ \Delta_g u_q + au_q = \lambda_q\frac{u_q^{q-1}}{d_g(x,x_0)^s}\hbox{ in }M. $$
In particular, we have that $\lambda_q = J_q(u_q)$. 

\medskip\noindent Now we proceed in several steps.

\medskip\noindent{\bf Step 1:} We claim that the sequence $(\lambda_q)_q$ converge to $\lambda_{\crit}$ when $q\rightarrow \crit$. 

\smallskip\noindent The proof follows the standard method described in \cite{Yamabe} and \cite{Aubin-2} for instance. We omit the proof.

%Indeed, since $\lim_{q\rightarrow \crit}J_q^{(s)}(u) = J^{(s)}(u)$ for all $u \in H_1^2(M)\setminus\{0\}$, it follows from the definitions of $\lambda^{(s)}_q$ and $\lambda^{(s)}$ that 
%\begin{equation}\label{art-H-S-lem-2-eqt-1}
%\limsup_{q\rightarrow \crit}\lambda^{(s)}_q \leq \lambda^{(s)}. 
%\end{equation}
%On the other hand, we proved for all $q  \in (2, \crit)$ that  $J_q^{(s)}(u_q) = \lambda_q^{(s)}$ and $u_q \in \mathcal{H}^{(s)}_q$  then
%writing $ \frac{I(u_q)}{\|u_q\|_{\crit,s}^2}\times\|u_q\|_{\crit,s}^2 = \lambda^{(s)}_q$
%implies that 
%\begin{equation}\label{art-H-S-lem-2-eqt-2}
%\liminf_{q\rightarrow \crit}\lambda^{(s)}_q \geq \lambda^{(s)}\liminf_{q\rightarrow \crit}\|u_q\|_{\crit,s}^2.
%\end{equation}
%But with H\"older inequality, we can write 
%$$ 1 \leq \|u_q\|_{\crit,s}\times\left(\int_{M}d_g(x,x_0)^{-s}dv_g\right)^{1-\frac{q}{\crit}},$$
%then 
%\begin{equation}\label{art-H-S-lem-2-eqt-3}
%\liminf_{q\rightarrow \crit}\|u_q\|_{\crit,s} \geq 1.
%\end{equation}
%From \eqref{art-H-S-lem-2-eqt-2} and \eqref{art-H-S-lem-2-eqt-3}, we deduce that  
%\begin{equation}\label{art-H-S-lem-2-eqt-4}
%\liminf_{q\rightarrow \crit}\lambda^{(s)}_q \geq \lambda^{(s)}.
%\end{equation}
%By combining  \eqref{art-H-S-lem-2-eqt-1} and \eqref{art-H-S-lem-2-eqt-4}, we prove the Lemma.

\medskip\noindent{\bf Step 2:} As one checks, the sequence $(u_q)_q$ is bounded in $H_1^2(M)$ independently of $q$. Therefore, there exists $u \in H_1^2(M)$, $u \geq 0$ a.e. such that, 
up to a subsequence, $(u_q)_q$ converge to $u$ weakly in $H_1^2(M)$ and strongly in $L^2(M)$, moreover, the convergence holds a.e. in $M$. It is classical (see \cite{Yamabe} and \cite{Aubin-2}) that $u\in H_1^2(M)$ is a weak solution to 
\begin{equation*}
\Delta_g u + au = \lambda_{\crit} \frac{u^{\crit-1}}{d_g(x,x_0)^s}\hbox{ in }M\, ;\, u\geq 0\hbox{ a.e. in }M.
\end{equation*}

\medskip\noindent{\bf Step 3:} We claim that $u\not\equiv 0$ is a minimizer of $J^{(s)}$ and that $(u_q)_q \to u$ strongly in $H_1^2(M)$.

\smallskip\noindent Indeed, it follows from the hypothesis \eqref{art-1-H-S-th-3-meancdt} that there exists $\epsilon_0 > 0$ such that 
\begin{equation}\label{art-H-S-th-3-eqt-4}
\lambda_{\crit}(K(n,s) + \epsilon_0) < 1.
\end{equation}
Now from Proposition \ref{art-H-S-prop-1}, we know that there exists $B_{\epsilon_0} > 0$ such that for all $q \in (2,\crit)$ : 
\begin{equation}\label{art-H-S-th-3-eqt-5}
\left(\int_{M}\frac{|u_q|^{2^\star(s)}}{d_g(x,x_0)^s}dv_g\right)^{\frac{2}{\crit}} \leq (K(n,s) + \epsilon_0)\int_M|\nabla u_q|^2dv_g + B_{\epsilon_0}\int_Mu_q^2dv_g.
\end{equation}
H\"older inequality and $u_q\in H_q$ yield:
\begin{equation}\label{art-H-S-th-3-eqt-6}
\left(\int_{M}\frac{|u_q|^{2^\star(s)}}{d_g(x,x_0)^s}dv_g\right)^{\frac{2}{\crit}} \geq 1.
\end{equation}
Combining \eqref{art-H-S-th-3-eqt-5} and \eqref{art-H-S-th-3-eqt-6}, we get : 
$$\|1\|_{q',s}\left[(K(n,s) + \epsilon_0)\lambda_q + B_{\epsilon_0}\int_Mu_q^2dv_g\right] \geq 1, $$
where $q' > 1$ verifies $q^{-1} = (q')^{-1} + (\crit)^{-1}$.
Letting $q\rightarrow\crit$ in the last relation, we  write : 
$$(K(n,s) + \epsilon_0)\lambda_{\crit} + B(\epsilon_0)\int_Mu^2dv_g \geq 1. $$
It then follows from \eqref{art-H-S-th-3-eqt-4} that $B_{\epsilon_0}\int_Mu^2dv_g>0$, and then $u \not\equiv 0$. It is then classical that $u\in H_1^2(M)$ is a minimizer and that $u_q\to u$ strongly in $H_1^2(M)$ when $q\to \crit$.

\medskip\noindent{\bf Step 4}: We claim that $u \in C^{0,\theta}(M)$, for all $\theta \in (0, \min\{1, 2-s\})$. Following the method used 
in \cite{GH-R1} (see Proposition 8.1) inspired from the strategy developed by Trudinger \cite{Tru-2} for the Yamabe problem, we get that $u \in L^p(M)$, 
for all $p \geq 1$. Defining $f_u(x):= \frac{u(x)^{\crit-1}}{d_g(x,x_0)^s}$, we then get from H\"older inequality that $f_u \in L^p(M)$, for all 
$p \in [1, \frac{n}{s})$. Since $\Delta_gu + au = f_u$ and $u \in H_1^2(M)$ and $s\in (0,2)$, it follows from standard elliptic theory (see \cite{Gil-Tru}) that
$u \in C^{0,\theta}(M)$, for all $\theta \in (0, \min\{1, 2-s\})$. 

\medskip\noindent{\bf Step 5}:  We claim that $u \in C^{1,\alpha}_{loc}(M\setminus\{x_0\})$, for all $\alpha\in (0,1)$. Indeed, since $u\in L^p(M)$ for all $p>1$ (see Step 4), we get that $f_u\in L^p_{loc}(M\setminus\{x_0\})$ for all $p>1$.
%We let $q > 1$ and $p_1, p_2 > 1$ be such that $p_2 > \max\{q, \frac{n}{s}\}$
%and $p_1^{-1} + p_2^{-1} = q^{-1}$. Since $u \in L^p(\Omega)$, for all $p \geq 1$ and $p_2 < \frac{n}{s}$, then by H\"older inequality, we get that $f_u \in L^q(\Omega)$.
Since $\Delta_gu + au = f_u$ and $u \in H_1^2(M)$, then, up to taking $p > n$ sufficiently large, it follows from standard elliptic theory (see \cite{Gil-Tru}) that $u \in C^{1,\alpha}_{loc}(M\setminus\{x_0\})$ for all $\alpha\in (0,1)$.\par
\smallskip\noindent{\it Remark:} If $a \in C^{0,\gamma}(M)$ for some $\gamma\in (0,1)$ then, using the same argument as above, we get that $u \in C^{2,\gamma}_{loc}(M\setminus\{x_0\})$.

\medskip\noindent{\bf Step 6:} \ We claim that $u > 0$ on $M$. Indeed, we consider $x_1 \neq x_0$ such that $\B_{2r}(x_1) \subset \subset M\setminus\{x_0\}$, 
with $r > 0$ sufficiently small and a function $h$ defined on $\B_{2r}(x_1)$ by $h(x) := a(x) - \lambda_{\crit}\frac{|u(x)|^{\crit-2}}{d_g(x,x_0)^s}$.
Clearly, we have that $h \in C^0(\overline{\B_{2r}(x_1)})$. Since $u \in H_1^2(\B_{2r}(x_1))$, $u \geq 0$ and $(\Delta_g + h)u = 0$ on $\B_{2r}(x_1)$. It then follows from standard elliptic theory (see \cite{Gil-Tru}, Theorem 8.20) that
there exists $C = C(M,g,x_1,r) > 0$ such that $\sup_{\B_r(x_1)}u \leq C\inf_{\B_r(x_1)}u$. This implies that $u_{|\B_r(x_1)} > 0$. Therefore, $u(x)>0$ for all $x\in M\setminus\{x_0\}$. 

\medskip\noindent We are left with proving that $u(x_0) > 0$. We argue by contradiction and we assume that $u(x_0) = 0$.
 
\smallskip\noindent{\bfseries Step 6.1.:} \ We claim that $u$ is differentiable at $x_0$.  Here again, we follow the method used in \cite{GH-R1} (see Proposition 8.1). 
% it follows from standard elliptic theory that $\frac{\partial u}{\partial\nu} < 0$, contradiction 
% since $u(x) > 0$,  for all $x \in M\setminus\{x_0\}$. This proves the final step in the proof.
%We consider $(\B_{r'}(x_0), \exp_{x_0}^{-1})$ an exponential ball centered on $x_0$, $r' > 0$ is small enough and $\hat{u} = u \circ \exp_{x_0}^{-1}$. 
Since $u \in C^{0,\alpha}(M)$, for all $\alpha \in (0, \min\{1, 2-s\})$ (from Step 4) and $u(x_0) = 0$ then for any $\alpha \in (0, \min\{1, 2-s\})$, there
exists a constant  $C_1(\alpha) = C(M, g, \alpha) > 0$ such that 
\begin{equation}\label{General-result-H-S-Step-6-eqt-1}
|u(x)| \leq C_1(\alpha)d_g(x,x_0)^\alpha
\end{equation}
for all $x\in M$. Therefore, we have that
\begin{equation}\label{eq:u:fu}
\Delta_gu + au = f_u ,
\end{equation}
where with \eqref{General-result-H-S-Step-6-eqt-1}, we have that
\begin{equation}\label{General-result-H-S-Step-6-eqt-2}
|f_u(x)| \leq \frac{C_2(\alpha)}{d_g(x,x_0)^{s-\alpha(\crit-1)}} 
\end{equation}
for all $x\in M\setminus\{x_0\}$.

\medskip\noindent We claim that $u \in C^{0,\alpha}(M)$, for all $\alpha \in (0,1)$. 

\smallskip\noindent Indeed, we define $\alpha_1 := \sup\{\alpha \in (0, 1)\ ; \ u \in C^{0,\alpha}(M)\}$ and $N'_s = s - \alpha_1(\crit-1)$ and distinguish the following cases : 
\\ \\ $\bullet$ {\itshape Case 6.1.1} \ \ $N'_s \leq 0$. In this case, up to taking $\alpha$ close enough 
to $\alpha_1$, we get  that $f_u \in L^p(M)$, for all $p \geq 1$. It follows from \eqref{eq:u:fu} and standard
elliptic theory that there exists $\theta \in (0,1)$ such that $u\in C^{1,\theta}(M)$. This proves that $\alpha_1 = 1$ in Case 6.1.1.
\\ $\bullet$ {\itshape Case 6.1.2} \ \ $0 < N'_s < 1$. In this case, up to taking $\alpha$ close enough to $\alpha_1$, we get  that $f_u \in L^p(M)$, for all $p < \frac{n}{N'_s}$.  Since $1 > N'_s$
 then there exists $p \in (n, \frac{n}{N'_s})$ such that $f_u \in L^p(M)$. Therefore, \eqref{eq:u:fu} and standard
elliptic theory yield the existence of $\theta \in (0,1)$ such that $u \in C^{1,\theta}(M)$. This proves that $\alpha_1 = 1$ in Case 6.1.2.
\\ $\bullet$ {\itshape Case 6.1.3} \ \ $N'_s = 1$. In this case, up to taking $\alpha$ close enough to $\alpha_1$, we get  that $f_u \in L^p(M)$, for all $p < n$.  
This implies that for any $p \in (\frac{n}{2}, n)$, we have that $f_u \in L^p(M)$. Equation \eqref{eq:u:fu} and standard
elliptic theory then yields $u\in C^{0,\theta}(M)$ for all $\theta\in (0,1)$. This proves that $\alpha_1 = 1$ in Case 6.1.3.
\\ $\bullet$ {\itshape Case 6.1.4} \ \ $N'_s > 1$. In this case, up to taking $\alpha$ close enough to $\alpha_1$, we get  that $f_u \in L^p(M)$, for all $p < \frac{n}{N'_s}$. 
Therefore, \eqref{eq:u:fu}, $N'_s \in (1,2)$ (because $N'_s > 0$ and $s < 2$), and standard elliptic theory yield $u \in C^{0,\theta}(M)$ 
for all $\theta < 2 - N'_s$. It then follows from the definition of $\alpha_1$ that $\alpha_1 \geq  2 - N'_s$. This leads to a contradiction with the definition of $N'_s$. Then Case 6.1.4 does not occur. 

\smallskip\noindent These four cases imply that $u \in C^{0,\alpha}(M)$, for all $\alpha \in (0,1)$. This proves the claim.

\medskip\noindent In order to end Step 6.1, we proceed as the above, let $N''_s = s - \crit + 1$ and distinguish two cases : 
\\ $\bullet$ {\itshape Case 6.1.5} \ \ $N''_s \leq 0$. In this case, up to taking $\alpha$ close enough 
to $1$, we have that $f_u \in L^p(M)$, for all $p \geq 1$. Therefore, \eqref{eq:u:fu} and elliptic theory yield $u \in C^1(M)$. This proves Step 6.1 in Case 6.1.5. 
\\ $\bullet$ {\itshape Case 6.1.6} \ \ $N''_s > 0$. In this case, up to taking $\alpha$ close enough 
to $1$, we have that $f_u \in L^p(M)$ for all $p < \frac{n}{N''_s}$. Since $1 > N''_s$, there exists $p \in (n, \frac{n}{N''_s})$ such that 
$f_u \in L^p(M)$. Therefore, it follows from \eqref{eq:u:fu} and elliptic theory that $u \in C^{1}(M)$. This proves the claim of Step 6.1 in Case 6.1.6.

\smallskip\noindent This ends Step 6.1.

\medskip\noindent {\bfseries Step 6.2:} We prove the contradiction here. Since $u\in C^1(M)$, we are able to follow the strategy of \cite{Gil-Tru} (see Lemma 3.4) to adapt Hopf's strong maximum principle. 
We let $\Omega \subset M\setminus\{x_0\}$ be an open set such that $x_0 \in \partial\Omega$ and $\partial\Omega$ satisfies an interior sphere condition at $x_0$,
then there exists an exponential chart $(\B_{2r_y}(y), \exp_y^{-1})$, $y \in \Omega, r_y > 0$ small enough such that $\B_{r_y}(y) \cap \partial\Omega = \{x_0\}$. We consider $C > 0$ such that
$$L_{g,C}(-u) := -(\Delta_g + C)(-u) \geq (\Delta_g + a)(u) \geq 0 $$
on $\Omega$. We fix $\rho \in (0,{r_y})$ and introduce the function $v_\rho$ defined on the annulus $\B_{r_y}(y)\setminus\B_\rho(y)$ by $v_\rho(x) = e^{-k r^2} - e^{-k r_y^2} $ where $r := d_g(x,y)$ and $k > 0$ to be determined. Now, if $\lambda(x)$ is 
the smaller eigenvalue of $g^{-1}$ then that for any $x \in \B_{r_y}(y)\setminus\B_\rho(y)$ we have that:
$$L_{g,C}v_\rho(x) \geq e^{-k r^2}\left[4k^2\lambda(x)r^2 - 2k\left(\sum_{i=1}^ng^{ii} + \Gamma_0r\right) - C\right]  $$ 
where $\Gamma_0 = \Gamma_0(g)$. Hence we choose $k$ large enough so that $L_{g,C}v_\rho \geq 0$ on $\B_{r_y}(y)\setminus\B_\rho(y)$. 
Since $-u < 0$ on $\partial\B_\rho(y)$ then there exists a constant $\epsilon > 0$ such that $- u + \epsilon v_\rho \leq 0$ on $\partial\B_\rho(y)$. Thus we have 
$- u + \epsilon v_\rho \in H_1^2(\B_{r_y}(y)\setminus\B_\rho(y))$, $- u + \epsilon v_\rho \leq 0$ on $\partial\B_\rho(y)$ and $L_{g,C}(- u + \epsilon v_\rho) \geq 0$ on $\B_{r_y}(y)\setminus\B_\rho(y)$. 
It follows from the weak maximum principle (see Theorem 8.1 in \cite{Gil-Tru}) that
\begin{equation}\label{General-result-H-S-Step-6-eqt-3}
- u + \epsilon v_\rho \leq 0, \ \ {\rm on} \ \B_{r_y}(y)\setminus\B_\rho(y)
\end{equation}
\\ In the sequel, $\B_r(0)$ denotes a ball in $(\R^n, \delta)$ centered at the origin and of radius $r$. Now we define $\tilde{u} = u \circ\exp_y$ and $\tilde{v}_\rho = v \circ\exp_y$ on $\B_{r_y}(0)$. By \eqref{General-result-H-S-Step-6-eqt-3}, we get : 
\begin{equation}\label{General-result-H-S-Step-6-eqt-4}
 \epsilon\tilde{v}_\rho \leq \tilde{u}, \ \ {\rm on} \ \B_{r_y}(0)\setminus\B_\rho(0)
\end{equation}
We define $X_0 := \exp_y^{-1}(x_0)$. Since $\hat{u}(X_0) = \hat{v}_\rho(X_0) = 0$, then, by \eqref{General-result-H-S-Step-6-eqt-4}, we can write that
 $$\frac{\partial\hat{u}}{\partial\nu}(X_0) := \liminf_{t \xrightarrow{t<0} 0}\frac{\tilde{u}(X_0 + t\nu) - \tilde{u}(X_0)}{t} \leq \epsilon\liminf_{t\xrightarrow{t<0} 0}
\frac{\tilde{v}_\rho(X_0 + t\nu) - \tilde{v}_\rho(X_0)}{t} := \epsilon\frac{\partial\hat{v}_\rho}{\partial\nu}(X_0),$$
where $\nu$ is the outer normal vector field on $\B_{r_y}(y)$.%defined as $\nu(X_0) = \vec{e}_1\wedge\vec{e}_2\wedge\vec{e}_3$ with $(\vec{e}_i)_{i=1,\ldots,3}$ is an orthonormal base of $\left(T_{X_0}\B_{2\rho_0}(0), (\exp_y^*g)_{X_0}\right)$.
Therefore $\frac{\partial\hat{u}}{\partial\nu}(X_0) \leq \epsilon\frac{\partial\tilde{v}_\rho}{\partial\nu}(X_0)$, but $\frac{\partial \tilde{v}_\rho}{\partial\nu}(x_0) = v'_\rho(R) $, 
it follows that $$\frac{\partial\hat{u}}{\partial\nu}(X_0) \leq \epsilon v'_\rho(r_y) < 0.$$
This is a contradiction since $\min_M u=u(x_0)$ and therefore  $\nabla \tilde{u}(X_0) = \nabla u(x_0) = 0$. This ends the proof of Step 6.  

\section{Test-functions for $n\geq 4$}
We consider the test-function sequence $(u_\epsilon)_{\epsilon>0}$ defined, for any $\epsilon > 0, x \in M$, by 
\begin{equation}\label{art-H-S-fct-test-p1-eqt-1}
u_\epsilon(x) = \left(\frac{\epsilon^{1-\frac{s}{2}}}{\epsilon^{2-s} + d_g(x,x_0)^{2-s}}\right)^{\frac{n-2}{2-s}}, 
\end{equation}
the function $\Phi$ defined on $\R^n$ by
\begin{equation}\label{art-H-S-fct-test-p1-eqt-1.1}
\Phi(X) = (1 + |X|^{2-s})^{-\frac{n-2}{2-s}}.
\end{equation}  
Since $u_\epsilon$ is a Lipschitz function, we have that $u_\epsilon \in H_1^2(M)$, for any $\epsilon > 0$. Given $\rho \in (0, i_g(M))$, where $i_g(M)$ is the injectivity radius on $M$, 
we recall that $\B_\rho(x_0)$ be the geodesic ball of center $x_0$ and radius $\rho$. Cartan's expansion of the metric $g$ (see \cite{Lee-parker}) in the exponential chart $(\B_\rho(x_0), exp_{x_0}^{-1})$  yields
\begin{equation}\label{art-H-S-fct-test-p1-eqt-2}
det(g)(x) = 1 - \frac{R_{\alpha\beta}}{3}(x_0)x^\alpha x^\beta + O(r^3), 
\end{equation}
where the $x^\alpha$'s are the coordinates of $x$, $r^2 = \sum_\alpha(x^\alpha)^2$  and $(R_{\alpha\beta})$ is the Ricci curvature. Integrating on the unit sphere $\mathbb{S}^{n-1}$ yields
$$\int_{\mathbb{S}^{n-1}}\sqrt{det(g)}(r\theta)d\theta = \omega_{n-1}\left[1-\frac{Scal_g(x_0)}{6n}r^2 +O(r^3)\right]. $$

\subsection{Estimate of the gradient term.}
At first, we estimate $\int_M|\nabla u_\epsilon|_g^2dv_g$. For that, we write for all $x \in M$ :
$$|\nabla u_\epsilon|_g^2(x) = (n-2)^2\epsilon^{n-2}\frac{r^{2(1-s)}}{(\epsilon^{2-s} + r^{2-s})^{\frac{2(n-s)}{2-s}}} $$
where $r=d_g(x,x_0)$. 
Therefore, using \eqref{art-H-S-fct-test-p1-eqt-1} and the change of variable $t = r\epsilon^{-1}$,  we get that
\begin{eqnarray}\label{art-H-S-fct-test-p1-eqt-3}
\int_{\mathbb{B}_\rho(x_0)}|\nabla u_\epsilon|_g^2dv_g &=&
(n-2)^2\epsilon^{n-2}\omega_{n-1}\times\int_0^\rho
\frac{r^{n+1}\left(1-\frac{Scal_g(x_0)}{6n}r^2 + O(r^3)\right)dr}{r^{2s}(\epsilon^{2-s} +r^{2-s})^{\frac{2(n-s)}{2-s}}} \nonumber
\\ &=& (n-2)^2\omega_{n-1}\int_0^{\frac{\rho}{\epsilon}}\frac{t^{n+1}(1-\frac{Scal_g(x_0)}{6n}(\epsilon t)^2
+ O((\epsilon t)^3))dt}{t^{2s}(1 +t^{2-s})^{\frac{2(n-s)}{2-s}}}.
\end{eqnarray}
Straightforward computations yield
\begin{equation}\label{art-H-S-fct-test-p1-eqt-4}
\int_0^{+\infty}\frac{t^{n+1}dt}{t^{2s}(1 + t^{2-s})^{\frac{2(n-s)}{2-s}}} = (n-2)^{-2}\omega_{n-1}^{-1} \int_{\R^n}|\nabla\Phi|^2dX,
\end{equation}
and 
\begin{equation}\label{art-H-S-fct-test-p1-eqt-5}
\epsilon^2\int_0^{+\infty}\frac{t^{n+1}dt}{t^{2s}(1 + t^{2-s})^{\frac{2(n-s)}{2-s}}}   = \left\{
                \begin{array}{ll}
             \epsilon^2(n-2)^{-2}\omega_{n-1}^{-1} \int_{\R^n}|X|^2|\nabla\Phi|^2dX   & \hbox{ if } \; n \geq 5 \hbox{ ,}\\
         \epsilon^2\ln\frac{1}{\epsilon}& \hbox{ if } \; n = 4 \hbox{ ,}\\
        O(\epsilon) & \hbox{ if } \; n = 3
\hbox{.}\end{array}\right.
\end{equation}
Since
$$ \int_{M\setminus\mathbb{B}_\rho(x_0)}|\nabla u_\epsilon|_g^2dv_g = O(\epsilon^{n-2}),$$
when $\epsilon\to 0$, putting together \eqref{art-H-S-fct-test-p1-eqt-3} with \eqref{art-H-S-fct-test-p1-eqt-4} and \eqref{art-H-S-fct-test-p1-eqt-5} yield
\begin{equation}\label{art-H-S-fct-test-p1-eqt-6}
 \int_M|\nabla u_\epsilon|^2dv_g  = \left\{
                \begin{array}{ll}
               \int_{\R^n}|\nabla\Phi|^2dX  - \frac{\int_{\R^n}|X|^2|\nabla\Phi|^2dX}{6n} \hbox{Scal}_g(x_0)\epsilon^2 +
o(\epsilon^2) & {\rm if} \; n \geq 5 \hbox{ ,} \\
&\\
                 \int_{\R^n}|\nabla\Phi|^2dX - \frac{\omega_3}{6}\hbox{Scal}_g(x_0)\epsilon^2ln(\frac{1}{\epsilon}) + O(\epsilon^2) &
{\rm if} \; n = 4 \hbox{ ,}\\
& \\
                 \int_{\R^n}|\nabla\Phi|^2dX  + O(\epsilon) & {\rm if} \; n = 3
\hbox{,}\\
               \end{array}
              \right.
\end{equation}
\medskip\noindent 
\medskip\noindent Arguing as the above and using that $a\in C^0(M)$, we get that : 
\begin{equation}\label{art-H-S-fct-test-p1-eqt-7}
\int_Mau_\epsilon^2dv_g   = \left\{
                \begin{array}{ll}
               \epsilon^2a(x_0)\int_{\R^n}\Phi^2dX + o(\epsilon^2) & \hbox{ if } \; n
\geq 5 \hbox{ ,} \\
&
\\
                a(x_0)\omega_3\epsilon^2\ln\frac{1}{\epsilon} + O(\epsilon^2) & \hbox{ if } \; n = 4 \hbox{ ,}\\
 & \\
                 O(\epsilon) & {\rm si} \; n = 3 \hbox{,}
               \end{array}
              \right.
\end{equation}
and 
\begin{equation}\label{art-H-S-fct-test-p1-eqt-8}
\int_M\frac{|u_\epsilon|^{\crit}}{d_g(x,x_0)^s}dv_g  = \left\{
                \begin{array}{ll}
               \int_{\R^n}\frac{|\Phi|^{\crit}}{|X|^s}dX -\epsilon^2\frac{Scal_g(x_0)}{6n}\int_{\R^n}|X|^{2-s}|\Phi|^{\crit}dX
 + o(\epsilon^2)  &{\rm if} \; n \geq 4  \hbox{ ,} 
\\ \int_{\R^3}\frac{|\Phi|^{\crit}}{|X|^s}dX + O(\epsilon)   &{\rm if} \; n =3  \hbox{.} 
   \end{array}
     \right.
\end{equation}
From Lieb \cite{Lieb-1}, we know that $\Phi$ is an extremal for \eqref{art-H-S-th-1-eqt-1}, that is
\begin{equation}\label{art-H-S-fct-test-p1-Lieb-1}
\frac{\int_{\R^n}|\nabla \Phi|^2dX}{\left(\int_{\R^n}\frac{|\Phi|^{\crit}}{|X|^s}dX\right)^{\frac{2}{\crit}}} = \frac{1}{K(n,s)}
\end{equation}
Combining \eqref{art-H-S-fct-test-p1-eqt-6}, \eqref{art-H-S-fct-test-p1-eqt-7} and \eqref{art-H-S-fct-test-p1-eqt-8} and this last equation, we obtain, for any $\epsilon > 0$, the following results :

\begin{equation}\label{art-H-S-fct-test-p1-eqt-9}
J(u_\epsilon) =  \frac{1}{K(n,s)}\left(1+
\left\{
\begin{array}{ll}
\left(C_1(n,s) a(x_0)-C_2(n,s)\hbox{Scal}_g(x_0)\right)\epsilon^2+o(\epsilon^2) &\hbox{ if }n\geq 5\\
  \omega_3(\int_{\R^4}|\nabla\Phi|^2\, dX)^{-1}\left(a(x_0) - 
\frac{1}{6}Scal_g(x_0)\right)\epsilon^2ln(\frac{1}{\epsilon}) + O(\epsilon^2)&\hbox{ if }n=4\\
O(\epsilon)& \hbox{ if }n=3
\end{array}\right\}\right)
\end{equation}
where
\begin{eqnarray*}
C_1(n,s) &:= &\frac{\int_{\R^n}|\Phi|^2dX}{\int_{\R^n}|\nabla\Phi|^2dX}\\
C_2(n,s) &:= &\frac{2}{\crit 6 n}\frac{\int_{\R^n}|X|^{2-s}|\Phi|^{\crit}dX}{\int_{\R^n}\frac{|\Phi|^{\crit}}{|X|^s}dX}-\frac{1}{6n}\frac{\int_{\R^n}|X|^2|\nabla\Phi|^2dX}{\int_{\R^n}|\nabla\Phi|^2dX}
\end{eqnarray*}
Unlike the case $s=0$, it is not possible to compute explicitly the constants $C_1(n,s)$ and $C_2(n,s)$. However, we are able to explicit their quotient, which is enough to prove our theorem. We need the following lemma taken from Aubin \cite{Aubin-2} :
\begin{lemma}\label{art-H-S-lem-3}
Let $p,q \in \R_+^*$ such that $p-q > 1$ and assume that $I_p^q = \int_0^{+\infty}\frac{t^qdt}{(1+t)^p}$,
then $$I_{p+1}^q = \frac{p-q-1}{p}I_p^q  \hbox{ and }\, \\ I_{p+1}^{q+1} = \frac{q+1}{p-q-1}I_{p+1}^q.$$
\end{lemma}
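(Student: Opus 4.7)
The plan is to derive both identities from a single integration-by-parts computation together with the algebraic decomposition $(1+t)^{-p} = (1+t) \cdot (1+t)^{-(p+1)}$. All integrals involved converge because the hypothesis $p-q>1$ (together with $p,q>0$) gives decay of order $(1+t)^{-(p-q)}$ at infinity and only an integrable singularity at $0$.

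First I would consider the derivative
\[
\frac{d}{dt}\!\left[\frac{t^{q+1}}{(1+t)^p}\right] = \frac{(q+1)t^q}{(1+t)^p} - \frac{p\,t^{q+1}}{(1+t)^{p+1}}.
\]
Since $q+1>0$ and $p-(q+1)>0$, the function $t^{q+1}(1+t)^{-p}$ vanishes at both $0$ and $\infty$, so integrating this identity on $(0,\infty)$ yields
\[
0 = (q+1)\,I_p^q - p\,I_{p+1}^{q+1},
\]
which gives the auxiliary relation $I_{p+1}^{q+1} = \frac{q+1}{p}\,I_p^q$.

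Next I would split the integrand of $I_p^q$ using
\[
\frac{t^q}{(1+t)^p} = \frac{t^q(1+t)}{(1+t)^{p+1}} = \frac{t^q}{(1+t)^{p+1}} + \frac{t^{q+1}}{(1+t)^{p+1}},
\]
which immediately gives $I_p^q = I_{p+1}^q + I_{p+1}^{q+1}$. Substituting the auxiliary relation yields
\[
I_{p+1}^q = I_p^q - \frac{q+1}{p}\,I_p^q = \frac{p-q-1}{p}\,I_p^q,
\]
which is the first identity of the lemma. Finally, taking the ratio of $I_{p+1}^{q+1} = \frac{q+1}{p}\,I_p^q$ with this expression gives
\[
I_{p+1}^{q+1} = \frac{q+1}{p-q-1}\,I_{p+1}^q,
\]
which is the second identity. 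There is no real obstacle here: the only care needed is to verify that $p-q>1$ guarantees both the convergence of every $I$ appearing and the vanishing of the boundary term in the integration by parts, both of which are immediate.
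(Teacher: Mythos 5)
Your proof is correct and follows essentially the same route as the paper: an integration by parts giving $I_p^q = \tfrac{p}{q+1}I_{p+1}^{q+1}$, combined with the splitting $I_p^q = I_{p+1}^q + I_{p+1}^{q+1}$, with the added (and welcome) care about convergence and vanishing of the boundary term under the hypothesis $p-q>1$. Nothing further is needed.
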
 
Indeed, an integration by parts shows that $I_p^q = \frac{p}{q+1}I_{p+1}^{q+1}$. On the other hand, we can easily see that $I_p^q = I_{p+1}^q + I_{p+1}^{q+1}$. 
Together, the above relations yield the lemma.  \\

\medskip\noindent We apply Lemma \ref{art-H-S-lem-3} to the computation of $C_2(n,s)/C_1(n,s)$ when $n\geq 5$. We have that

\begin{equation}\label{art-H-S-fct-test-p1-eqt-A}
\frac{C_2(n,s)}{C_1(n,s)} = 
\frac{2}{\crit 6 n}\frac{\int_{\R^n}|X|^{2-s}|\Phi|^{\crit}dX}{\int_{\R^n}\frac{|\Phi|^{\crit}}{|X|^s}dX}\cdot \frac{\int_{\R^n}|\nabla\Phi|^2dX}{\int_{\R^n}|\Phi|^2dX}-\frac{1}{6n}\frac{\int_{\R^n}|X|^2|\nabla\Phi|^2dX}{\int_{\R^n}\Phi^2dX}
\end{equation}
Independently
$$\frac{\int_{\R^n}|X|^2|\nabla\Phi|^2dX}{\int_{\R^n}|\Phi|^2dX} = 
\frac{(n-2)^2\int_0^{+\infty}\frac{r^{n+3-2s}dr}{(1+r^{2-s})^{\frac{2(n-s)}{2-s}}}}{\int_0^{+\infty}\frac{r^{n-1}dr}{(1+r^{2-s})^{\frac{2(n-2)}{2-s}}}}, $$
up to taking $t = r^{2-s}$ and using the Lemma \ref{art-H-S-lem-3}, we get that : 
\begin{equation}\label{art-H-S-fct-test-p1-eqt-B}
\frac{\int_{\R^n}|X|^2|\nabla\Phi|^2dX}{\int_{\R^n}|\Phi|^2dX} = \frac{\frac{(n-2)^2}{2-s}\int_0^{+\infty}\frac{t^{\frac{n}{2-s}+1}dt}{(1+t)^{\frac{2(n-2)}{2-s}}}}{\frac{1}{2-s}\int_0^{+\infty}\frac{t^{\frac{n}{2-s}+2}dt}{(1+t)^{\frac{2(n-2)}{2-s}}}}
= \frac{n(n-2)(n+2-s)}{2(2n-2-s)},
\end{equation} 
\begin{equation}\label{art-H-S-fct-test-p1-eqt-C}
\frac{\int_{\R^n}|X|^{2-s}\cdot|\Phi|^{\crit}dX}{\int_{\R^n}|\Phi|^2dX} = \frac{n(n-4)}{2(n-2)(2n-2-s)}
\end{equation} 
and 
\begin{equation}\label{art-H-S-fct-test-p1-eqt-D}
 \frac{\int_{\R^n}|\nabla\Phi|^2dX}{\int_{\R^n}|X|^{-s}\cdot|\Phi|^{\crit}dX} = (n-2)(n-s).
\end{equation} 

%\medskip\noindent We now compute $\frac{C_2}{C_1}-\frac{2}{\crit}\frac{C_4}{C_3}$. With the definitions of $C_i$, $i= 1,\ldots,4$ and \eqref{art-H-S-fct-test-p1-eqt-10} we  write : 
%\begin{eqnarray}\label{art-H-S-fct-test-p1-eqt-11}
%\frac{C_2}{C_1}-\frac{2}{\crit}\frac{C_4}{C_3} &=&
%\frac{\int_{\R^n}|\Phi|^2dx}{\int_{\R^n}|\nabla\Phi|^2dx}\left(a(x_0) - \frac{(n-2)(n+2-s)}{12(2n-2-s)}\hbox{Scal}_g(x_0)\right)\nonumber
%\\ && +
%\frac{2}{\crit}\times\frac{Scal_g(x_0)}{6n}\times\frac{\int_{\R^n}|X|^{2-s}\cdot|\Phi|^{\crit}dX}{\int_{\R^n}|X|^{-s}\cdot|\Phi|^{\crit}dX}\nonumber
%\\  &=& 
%\frac{\int_{\R^n}|\Phi|^2dX}{\int_{\R^n}|\nabla\Phi|^2dX}\times\left(a(x_0)
%- C_5Scal_g(x_0)+ \frac{2}{\crit}\times\frac{Scal_g(x_0)}{6n}\right.\nonumber
%\\ &&
%\left.\times\frac{\int_{\R^n}|X|^{2-s}\cdot|\Phi|^{\crit}dX}{\int_{\R^n}|\Phi|^2dX}\times\frac{\int_{\R^n}|\nabla\Phi|^2dX}{\int_{\R^n}|X|^{-s}\cdot|\Phi|^{\crit}dX}\right).
%\end{eqnarray}
%Using lemma \ref{art-H-S-lem-3}, we get that
%Therefore, \eqref{art-H-S-fct-test-p1-eqt-10} and \eqref{art-H-S-fct-test-p1-eqt-11} yield 
%\begin{eqnarray}\label{art-H-S-fct-test-p1-eqt-12}
%\frac{C_2}{C_1}-\frac{2}{\crit}\frac{C_4}{C_3} &=& \frac{\int_{\R^n}|\Phi|^2dx}{\int_{\R^n}|\nabla\Phi|^2dx}\left(a(x_0) -
%C(n,s)\hbox{Scal}_g(x_0)\right)
%\end{eqnarray}
Therefore, plugging  \eqref{art-H-S-fct-test-p1-eqt-B},  \eqref{art-H-S-fct-test-p1-eqt-C} and \eqref{art-H-S-fct-test-p1-eqt-D} into \eqref{art-H-S-fct-test-p1-eqt-A} yields
$$\frac{C_2(n,s)}{C_1(n,s)}= \frac{(n-2)(6-s)}{12(2n-2-s)}$$
when $n\geq 5$. As a conclusion, the expansion \eqref{art-H-S-fct-test-p1-eqt-9} rewrites
\begin{equation}\label{art-H-S-exp-n-5}
J(u_\epsilon) =  \frac{1}{K(n,s)}\left(1+
\left\{
\begin{array}{ll}
\displaystyle{\frac{\int_{\R^n}|\Phi|^2dX}{\int_{\R^n}|\nabla\Phi|^2dX}}\left(a(x_0)-c_{n,s}\hbox{Scal}_g(x_0)\right)\epsilon^2+o(\epsilon^2) &\hbox{ if }n\geq 5\\
\\
  \displaystyle{\frac{\omega_3}{\int_{\R^4}|\nabla\Phi|^2dX}}\left(a(x_0) - c_{n,s}\hbox{Scal}_g(x_0)\right)\epsilon^2ln(\frac{1}{\epsilon}) + O(\epsilon^2)&\hbox{ if }n=4\\
  \\
O(\epsilon)& \hbox{ if }n=3
\end{array}\right\}\right)
\end{equation}
where
\begin{equation}\label{art-H-S-fct-test-p1-eqt-13}
c_{n,s}: = \frac{(n-2)(6-s)}{12(2n-2-s)}.
\end{equation}
As a consequence, we then get the following theorem:
\begin{theorem}\label{art-H-S-th-4} Let $(M,g)$ be a compact Riemannian Manifold of dimension $n \geq 3$. Let $a \in C^0(M)$ such that
$\Delta_g + a$ is coercive, $x_0 \in M$ and $s \in (0,2)$. Then for all $n \geq 3$, we have that 
\begin{equation}\label{ineq:large}
\inf_{v\in H_1^2(M)\setminus\{0\}}J(v) \leq K(n,s)^{-1}. 
\end{equation}
Moreover, if $n\geq 4$ and $a(x_0) < c_{n,s}Scal_{g}(x_0)$, where $c_{n,s}$ is as \eqref{art-H-S-fct-test-p1-eqt-13}, then inequality \eqref{ineq:large} is strict.
\end{theorem}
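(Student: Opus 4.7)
The plan is to use directly the test-function family $(u_\epsilon)_{\epsilon>0}$ introduced in \eqref{art-H-S-fct-test-p1-eqt-1} and the expansion \eqref{art-H-S-exp-n-5} of $J(u_\epsilon)$ obtained in the preceding computations. Since $u_\epsilon \in H_1^2(M)\setminus\{0\}$ for every $\epsilon > 0$, one has
\begin{equation*}
\inf_{v\in H_1^2(M)\setminus\{0\}} J(v) \;\leq\; \limsup_{\epsilon\to 0} J(u_\epsilon),
\end{equation*}
so everything reduces to reading off \eqref{art-H-S-exp-n-5}.

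For the non-strict inequality \eqref{ineq:large}, I would simply let $\epsilon \to 0$ in \eqref{art-H-S-exp-n-5}. In each of the three cases $n\geq 5$, $n=4$, $n=3$, the bracketed correction term tends to $0$ (respectively of order $\epsilon^2$, $\epsilon^2\ln(1/\epsilon)$, and $\epsilon$), hence $\lim_{\epsilon\to 0} J(u_\epsilon) = K(n,s)^{-1}$. Taking the infimum over $v \in H_1^2(M)\setminus\{0\}$ gives the conclusion \eqref{ineq:large} for all $n\geq 3$.

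For the strict inequality when $n\geq 4$, I would exploit the sign of the leading correction. Under the assumption $a(x_0) < c_{n,s}\mathrm{Scal}_g(x_0)$, the factor $a(x_0)-c_{n,s}\mathrm{Scal}_g(x_0)$ appearing in \eqref{art-H-S-exp-n-5} is strictly negative. The positivity of the multiplicative constants $\int_{\R^n}|\Phi|^2\,dX/\int_{\R^n}|\nabla\Phi|^2\,dX$ (for $n\geq 5$) and $\omega_3/\int_{\R^4}|\nabla\Phi|^2\,dX$ (for $n=4$), which one reads off from the definition of $\Phi$ in \eqref{art-H-S-fct-test-p1-eqt-1.1} and the fact that $\Phi\not\equiv 0$ is smooth with suitable decay, makes the leading correction strictly negative. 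Hence, for $\epsilon > 0$ small enough, the correction $\epsilon^2 + o(\epsilon^2)$ (or $\epsilon^2\ln(1/\epsilon) + O(\epsilon^2)$ when $n=4$) is itself strictly negative, which yields $J(u_\epsilon) < K(n,s)^{-1}$ and therefore $\inf J < K(n,s)^{-1}$.

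The only subtlety — and the main (mild) obstacle — lies in the $n=4$ case: there the error term is $O(\epsilon^2)$ while the leading correction is $\epsilon^2\ln(1/\epsilon)$. One must observe that $\epsilon^2 = o(\epsilon^2 \ln(1/\epsilon))$ as $\epsilon\to 0$, so the sign of the leading term does indeed dictate the sign of the whole correction for $\epsilon$ sufficiently small. No such issue arises for $n\geq 5$, where the remainder is genuinely $o(\epsilon^2)$. Note finally that the $n=3$ case is deliberately excluded from the strict inequality, since the expansion only gives $O(\epsilon)$ with no controlled sign; this is precisely why the global mass argument is needed to handle dimension three later in the paper.
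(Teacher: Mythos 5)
Your proposal is correct and follows exactly the paper's route: Theorem \ref{art-H-S-th-4} is stated there as a direct consequence of the expansion \eqref{art-H-S-exp-n-5}, letting $\epsilon\to 0$ for \eqref{ineq:large} and using the strictly negative sign of the correction term when $n\geq 4$ and $a(x_0)<c_{n,s}\hbox{Scal}_g(x_0)$. Your remark that $\epsilon^2=o(\epsilon^2\ln(1/\epsilon))$ in the case $n=4$ is precisely the observation needed to make the strictness argument work there.
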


\section{Test-functions: the case $n=3$}
The argument used for $n\geq 4$ is local in the sense that the expansion \eqref{art-H-S-exp-n-5} only involves the values of $a$ and $Scal_g$ at the singular point $x_0$. When $n=3$, the first-order 
in \eqref{art-H-S-exp-n-5} of Section 3 has an undetermined sign. It is well-known since Schoen \cite{Schoen-1} that the relevant quantity to use in small dimension is the mass, which is a global quantity.

\medskip\noindent We follow the technique developed by Druet \cite{Druet-1} for test-function in dimension $3$. The case of a manifold with boundary is discussed at the end of this section. We define the Green-function $G_{x_0}$ of the elliptic operator $\Delta_g + a$ on $x_0$ as the unique function strictly positive and symmetric verifying, in the sens of distribution, 
\begin{equation}\label{art-H-S-fct-test-p2-eqt-1}
 \Delta_gG_{x_0} + aG_{x_0} = D_{x_0},
\end{equation}
where $D_{x_0}$ is the Dirac mass at $x_0$. We fix $\rho \in (0, i_g(M)/2)$ and we consider a cut-off function $\eta \in C_c^\infty(\B_{2\rho}(x_0))$ such that $\eta \equiv 1$ on $\B_{\rho}(x_0)$. Then 
there exists $ \beta_{x_0} \in H_1^2(M)$ such that we can write $G_{x_0}$ as follow : 
\begin{equation}\label{art-H-S-fct-test-p2-eqt-2}
\omega_2G_{x_0} (x)= \frac{\eta(x)}{d_g(x,x_0)} + \beta_{x_0}(x)
\end{equation}
for all $x\in M$.  According to \eqref{art-H-S-fct-test-p2-eqt-1} and \eqref{art-H-S-fct-test-p2-eqt-2}, we have that
\begin{equation}\label{art-H-S-fct-test-p2-eqt-3}
 \Delta_g\beta_{x_0} + a\beta_{x_0} = f_{x_0}
\end{equation} 
where
\begin{equation}\label{art-H-S-fct-test-p2-eqt-4}
f_{x_0}(x):= -\Delta_g\left(\frac{\eta(x)}{d_g(x,x_0)}\right) - \frac{(a\eta)(x)}{d_g(x,x_0)}    \hbox{ for all }x\in M\setminus\{x_0\}.
\end{equation}
%\begin{equation}\label{art-H-S-fct-test-p2-eqt-4}
%f_{x_0}(x):= \left\{
%                \begin{array}{ll}
%              -\Delta_g\left(\frac{\eta(x)}{d_g(x,x_0)}\right) - \frac{(a\eta)(x)}{d_g(x,x_0)}    & {\rm for \ all } \; x\in M\setminus\B_\rho(x_0)   \hbox{ ,} 
%\\   \frac{1}{2d_g(x,x_0)^2}\partial_r(\ln det(g)) - \frac{a(x)}{d_g(x,x_0)} & {\rm for\  all } \; x \in \B_\rho(x_0) \hbox{.} 
%   \end{array}
%      \right.
%\end{equation} 
In particular, for all $p \in (1, 3) $, we have $f_{x_0}\in L^p(M)$. Therefore, it follows from standard elliptic theory that 
 $\beta_{x_0} \in C^0(M) \cap C^1_{loc}(M\setminus\{x_0\})\cap H_2^p(M)$ for all $p\in (1,3)$. In particular, the mass satisfies $m(x_0)=\beta_{x_0}(x_0)$. For any $\epsilon>0$, we define, on $M$, the function
% $$u_\epsilon(x) = \left(\frac{\epsilon^{1-\frac{s}{2}}}{\epsilon^{2-s} + d_g(x,x_0)^{2-s}}\right)^{\frac{1}{2-s}}, $$
$$v_\epsilon = \eta u_\epsilon + \sqrt{\epsilon}\beta_{x_0},$$
where $u_\epsilon$ is the general test-function defined as \eqref{art-H-S-fct-test-p1-eqt-1}.
\medskip\noindent This section is devoted to computing the expansion of $J(v_\epsilon)$. We compute the different terms separately.

\subsection{The leading term $\int_M(|\nabla v_\epsilon|_g^2 + av_\epsilon^2)dv_g$}
Integration by parts and using the definition of $v_\epsilon$, we write, for any $\epsilon > 0$, that : 
\begin{eqnarray}\label{art-H-S-fct-test-p2-eqt-5}
&&\int_M(|\nabla v_\epsilon|^2_g + av_\epsilon^2)dv_g = \int_M\eta^2u_\epsilon\Delta_gu_\epsilon dv_g + \int_Mu_\epsilon^2\eta\Delta_g\eta dv_g 
\\ &&  - \int_M\eta(\nabla\eta, \nabla u_\epsilon^2)_gdv_g + \int_Ma\eta^2u_\epsilon^2dv_g + \int_M(\Delta_g\beta_{x_0} + a\beta_{x_0})(\epsilon\beta + 2\sqrt{\epsilon}\eta u_\epsilon)dv_g.\nonumber
\end{eqnarray}
Writing $u_\epsilon^2$ in the form :
\begin{equation}\label{art-H-S-fct-test-p2-eqt-5.1}
u_\epsilon^2(x) = \frac{\epsilon}{d_g(x,x_0)^2} + O(\epsilon^{5-2s}),
\end{equation} 
with $O(1) \in C^2(M\setminus\B_\rho(x_0))$ uniformly bounded with respect to $\epsilon$,
%$$u_\epsilon^2(x) = \frac{\epsilon}{d_g(x,x_0)^2}\left[1 + O\left(\frac{\epsilon^{2-s}}{\epsilon^{2-s} + d_g(x,x_0)^{2-s}}\right)\right] $$
we obtain that 
\begin{equation}\label{art-H-S-fct-test-p2-eqt-6.1}
 \int_Mu_\epsilon^2\eta\Delta_g\eta dv_g = \epsilon\int_{M\setminus\B_\rho(x_0)}\frac{\eta\Delta_g\eta}{d_g(x,x_0)^2}dv_g + o(\epsilon),
\end{equation}
and 
\begin{equation}\label{art-H-S-fct-test-p2-eqt-6.2}
 \int_M\eta(\nabla\eta, \nabla u_\epsilon^2)_gdv_g = \epsilon\int_{M\setminus\B_\rho(x_0)}\eta(\nabla\eta, \nabla\frac{1}{d_g(x,x_0)^2})_gdv_g + o(\epsilon).
\end{equation}
By integrating by parts, using \eqref{art-H-S-fct-test-p2-eqt-5.1} and since $\partial_\nu\eta = 0$ then we write 
\begin{equation}\label{art-H-S-fct-test-p2-eqt-6}
 \int_Mu_\epsilon^2\eta\Delta_g\eta dv_g -  \int_M\eta(\nabla\eta, \nabla u_\epsilon^2)_gdv_g =  \epsilon\int_{M\setminus\B_\rho(x_0)}\frac{|\nabla\eta|_g^2}{d_g(x,x_0)^2}dv_g + o(\epsilon)
\end{equation}
We have also that 
\begin{equation}\label{art-H-S-fct-test-p2-eqt-7.0}
 \int_Ma\eta^2u_\epsilon^2dv_g = \epsilon\int_M\frac{a\eta^2}{d_g(x,x_0)^2}dv_g + R_1(\epsilon) + o(\epsilon),
\end{equation}
where, as in \eqref{art-H-S-fct-test-p1-eqt-3},
$$R_1(\epsilon) = O\left(\epsilon^{3-s}\int_{\B_\rho(x_0)}\frac{a\eta^2dv_g}{d_g(x,x_0)^2(\epsilon^{2-s} + d_g(x,x_0)^{2-s})}\right)=O\left(\epsilon^2\int_0^{\frac{\rho}{\epsilon}}\frac{dt}{1 + t^{2-s}}\right) = o(\epsilon). $$
%Proceeding as in \eqref{art-H-S-fct-test-p1-eqt-3}, we obtain that 
%$$R_1(\epsilon) = O\left(\epsilon^2\int_0^{\frac{\delta}{\epsilon}}\frac{d\rho}{1 + \rho^{2-s}}\right). $$
This latest relation and \eqref{art-H-S-fct-test-p2-eqt-7.0} give that 
\begin{equation}\label{art-H-S-fct-test-p2-eqt-7}
 \int_Ma\eta^2u_\epsilon^2dv_g = \epsilon\int_M\frac{a\eta^2}{d_g(x,x_0)^2}dv_g + o(\epsilon).
\end{equation}
Writing now $u_\epsilon$ in the form
\begin{equation}\label{art-H-S-fct-test-p2-eqt-8}
u_\epsilon(x) =  \frac{\sqrt{\epsilon}}{d_g(x,x_0)} + O(\epsilon^{\frac{5}{2}-s}),
\end{equation}
%$u_\epsilon(x) = \frac{\sqrt{\epsilon}}{d_g(x,x_0)}\left[1 + O\left(\frac{\epsilon^{2-s}}{\epsilon^{2-s} + d_g(x,x_0)^{2-s}}\right)\right], $
with $O(1) \in C^2(M\setminus\B_\rho(x_0))$
we get  that 
\begin{equation}\label{art-H-S-fct-test-p2-eqt-9}
\int_{M\setminus\B_\rho(x_0)}\eta^2u_\epsilon\Delta_g u_\epsilon dv_g = 
\epsilon\int_{M\setminus\B_\rho(x_0)}\frac{\eta^2}{d_g(x,x_0)}\Delta_g(\frac{1}{d_g(x,x_0)})dv_g + o(\epsilon).
\end{equation}
Since $u_\epsilon$ is radially symmetrical, denoting $\Delta_\delta$ as the Laplacian in the Euclidean metric $\delta$, we get with a change of variable and Cartan's expansion of the metric \eqref{art-H-S-fct-test-p1-eqt-2} that
%we have that
%$$\Delta_\delta u_\epsilon = \frac{(3-s)\epsilon^{\frac{5}{2}-s}}{r^s(\epsilon^{2-s} + r^{2-s})^{\frac{5-2s}{2-s}}}, $$
%where $\delta$ is the Euclidean metric then 
$$\int_{\B_\rho(x_0)}u_\epsilon\Delta_\delta u_\epsilon dv_g = \int_{\R^n}\Phi\Delta_\delta\Phi dX + o(\epsilon),$$
%\begin{eqnarray*}
%\int_{\B_\rho(x_0)}u_\epsilon\Delta_\delta u_\epsilon dv_g &=& \omega_2(3-s)\int_0^{+\infty}\frac{\rho^{2-s}d\rho}{(1 + \rho^{2-s})^{\frac{6-2s}{2-s}}} + o(\epsilon)
%\\ &=& (3-s)\int_{\R^n}\frac{\Phi^{\crit}(x)}{|x|^s}dX + o(\epsilon)
%\\ &=& \int_{\R^n}\Phi\Delta_\delta\Phi dX + o(\epsilon),
%\end{eqnarray*}
where  $\Phi$ is defined in \eqref{art-H-S-fct-test-p1-eqt-1.1}. Since $$\Delta_g u_\epsilon = \Delta_\delta u_\epsilon - \partial_r(\ln det(g))\partial_r u_\epsilon$$
in $g-$normal coordinates, we have that
\begin{eqnarray}\label{art-H-S-fct-test-p2-eqt-10}
\int_{\B_\rho(x_0)}u_\epsilon\Delta_g u_\epsilon dv_g &=& \int_{\R^n}\Phi\Delta_\delta\Phi dX + o(\epsilon)\nonumber
\\ && + \epsilon\int_{\B_\rho(x_0)}\frac{d_g(x,x_0)^{1-s}\partial_r(\ln det(g))}{2(\epsilon^{2-s} + d_g(x,x_0)^{2-s})^{\frac{4-s}{2-s}}}dv_g+o(\epsilon)
\end{eqnarray}
%Combining \eqref{art-H-S-fct-test-p2-eqt-9} and \eqref{art-H-S-fct-test-p2-eqt-10}, we get that MANQUE QQCH???
%\begin{eqnarray}\label{art-H-S-fct-test-p2-eqt-11}
%\int_{\B_\rho(x_0)}u_\epsilon\Delta_g u_\epsilon dv_g &=& \int_{\R^n}\Phi\Delta_\delta\Phi dX +o(\epsilon) \nonumber
%\\ && + \epsilon\int_{\B_\rho(x_0)}\frac{d_g(x,x_0)^{1-s}\partial_r(\ln det(g))}{2(\epsilon^{2-s} + d_g(x,x_0)^{2-s})^{\frac{4-s}{2-s}}}dv_g. \nonumber
%\\ &&
%\end{eqnarray}
when $\epsilon\to 0$. Similar computations to the ones we just developed give that 
\begin{eqnarray*}
&&\epsilon\int_{\B_\rho(x_0)}\frac{d_g(x,x_0)^{1-s}\partial_r(\ln det(g))}{2(\epsilon^{2-s} + d_g(x,x_0)^{2-s})^{\frac{4-s}{2-s}}}dv_g =
\epsilon\int_{\B_\rho(x_0)}\frac{\partial_r(\ln det(g))}{2d_g(x,x_0)^3}dv_g\\ 
 && +O\left(\epsilon^{3-s}\int_{\B_\rho(x_0)}\frac{\partial_r(\ln det(g))}{d_g(x,x_0)^3(\epsilon^{2-s} + d_g(x,x_0)^{2-s})}dv_g\right),
\\ &&
\end{eqnarray*}
Cartan's expansion of the metric $g$, \eqref{art-H-S-fct-test-p1-eqt-2} and to this latest relation yield 
\begin{eqnarray}\label{art-H-S-fct-test-p2-eqt-12}
\epsilon\int_{\B_\rho(x_0)}\frac{d_g(x,x_0)^{1-s}\partial_r(\ln det(g))}{2(\epsilon^{2-s} + d_g(x,x_0)^{2-s})^{\frac{4-s}{2-s}}}dv_g &=& 
\epsilon\int_{\B_\rho(x_0)}\frac{\partial_r(\ln det(g))}{2d_g(x,x_0)^3}dv_g + o(\epsilon) \nonumber
\\ && 
\end{eqnarray}
Relations \eqref{art-H-S-fct-test-p2-eqt-9}, \eqref{art-H-S-fct-test-p2-eqt-10} and \eqref{art-H-S-fct-test-p2-eqt-12} yield 
\begin{eqnarray}\label{art-H-S-fct-test-p2-eqt-13}
\int_M\eta^2u_\epsilon\Delta_g u_\epsilon dv_g &=& \int_{\R^n}\Phi\Delta_\delta\Phi dX +  
\epsilon\int_{\B_\rho(x_0)}\frac{\partial_r(\ln det(g))}{2d_g(x,x_0)^3}dv_g \nonumber
\\ && + \epsilon\int_{M\setminus\B_\rho(x_0)}\frac{\eta^2}{d_g(x,x_0)}\Delta_g(\frac{1}{d_g(x,x_0)})dv_g+ o(\epsilon) 
\end{eqnarray}
when $\epsilon\to 0$. At last, using again the expansion \eqref{art-H-S-fct-test-p2-eqt-13} of $u_\epsilon$, we obtain that :
\begin{eqnarray*}
\int_M(\Delta_g\beta_{x_0} + a\beta_{x_0})(\epsilon\beta_{x_0} + 2\sqrt{\epsilon}\eta u_\epsilon)dv_g &=& 
 \epsilon\int_M(\Delta_g\beta_{x_0} + a\beta_{x_0})(\beta_{x_0} + 2\frac{\eta}{d_g(x,x_0)})dv_g + 
\\ && O\left(\epsilon^{3-s}\int_M\frac{(\Delta_g\beta_{x_0} + a\beta_{x_0})\eta}{d_g(x,x_0)(\epsilon^{2-s} + d_g(x,x_0)^{2-s})}dv_g\right).
\end{eqnarray*} 
The latest relation and \eqref{art-H-S-fct-test-p2-eqt-3} allow to write : 
\begin{eqnarray*}
\int_M(\Delta_g\beta_{x_0} + a\beta_{x_0})(\epsilon\beta_{x_0} + 2\sqrt{\epsilon}\eta u_\epsilon)dv_g &=& 
 \epsilon\int_M(\Delta_g\beta_{x_0} + a\beta_{x_0})(\beta_{x_0} + 2\frac{\eta}{d_g(x,x_0)})dv_g + o(\epsilon).
\\ && 
\end{eqnarray*}
Since $\beta_{x_0}\in C^0(M)\cap H_2^p(M)$ for all $p \in (\frac{3}{2}, 3)$, it follows from \eqref{art-H-S-fct-test-p2-eqt-1} and \eqref{art-H-S-fct-test-p2-eqt-2} that  
$$\int_M(\Delta_g\beta_{x_0} + a\beta_{x_0})(\beta_{x_0} + \frac{\eta}{d_g(x,x_0)})dv_g =  \omega_2\beta_{x_0}(x_0).$$
Then the last couple of relations give that 
\begin{eqnarray}\label{art-H-S-fct-test-p2-eqt-14}
\int_M(\Delta_g\beta_{x_0} + a\beta_{x_0})(\epsilon\beta_{x_0} + 2\sqrt{\epsilon}\eta u_\epsilon)dv_g &=& \epsilon\omega_2\beta_{x_0}(x_0) + o(\epsilon)\nonumber
\\ && \ \  + \epsilon\int_M(\Delta_g\beta_{x_0} + a\beta_{x_0})(\beta_{x_0} + \frac{\eta}{d_g(x,x_0)})dv_g
\end{eqnarray}
when $\epsilon\to 0$. Knowing, from \eqref{art-H-S-fct-test-p2-eqt-3} and \eqref{art-H-S-fct-test-p2-eqt-4}, that 
\begin{eqnarray*}
\int_M(\Delta_g\beta_{x_0} + a\beta_{x_0})\frac{\eta}{d_g(x,x_0)}dv_g &=& -\int_{\B_\rho(x_0)}\frac{\partial_r(\ln det(g))}{2d_g(x,x_0)^3}dv_g
 - \int_M\frac{a\eta^2}{d_g(x,x_0)^2}dv_g +\nonumber
\\ && - \int_{M\setminus\B_\rho(x_0)}\frac{\eta}{d_g(x,x_0)}\Delta_g(\frac{\eta}{d_g(x,x_0)})dv_g,
\end{eqnarray*}
 and combining \eqref{art-H-S-fct-test-p2-eqt-6}, \eqref{art-H-S-fct-test-p2-eqt-7},\eqref{art-H-S-fct-test-p2-eqt-13} and \eqref{art-H-S-fct-test-p2-eqt-14} with \eqref{art-H-S-fct-test-p2-eqt-5}, we get that  
\begin{equation}\label{art-H-S-fct-test-p2-eqt-15}
 \int_M(|\nabla v_\epsilon|^2_g + av_\epsilon^2)dv_g = \int_{\R^n}\Phi\Delta_\delta\Phi dX + \epsilon\omega_2\beta_{x_0}(x_0) + o(\epsilon)
\end{equation}
when $\epsilon\to 0$.

\subsection{Estimate of $\int_M\frac{v_\epsilon^{\crit}}{d_g(x,x_0)^s}dv_g$.}
Since $s \in (0,2)$ then $6-2s > 2$. Therefore there exists $C(s) > 0$ such that for all $X, Y \in \R$, we have : 
$$\left||X+Y|^{6-2s} - |X|^{6-2s} - (6-2s)XY|X|^{4-2s}\right| \leq C(s)\left(|X|^{4-2s}Y^2 + |X|^{6-2s}\right) $$
This  allows to write 
\begin{eqnarray}\label{art-H-S-fct-test-p2-eqt-16}
\int_M\frac{v_\epsilon^{\crit}}{d_g(x,x_0)^s}dv_g &=& \int_M\frac{(\eta u_\epsilon + \sqrt{\epsilon}\beta_{x_0})^{6-2s}}{d_g(x,x_0)^s}dv_g\nonumber
\\ &=&  \int_{\B_\rho(x_0)}\frac{(u_\epsilon + \sqrt{\epsilon}\beta_{x_0})^{6-2s}}{d_g(x,x_0)^s}dv_g + O(\epsilon^{3-s})\nonumber
\\ &=& \int_{\B_\rho(x_0)}\frac{u_\epsilon^{6-2s} + (6-2s)|u_\epsilon|^{6-2s-2}u_\epsilon\sqrt{\epsilon}\beta_{x_0}}{d_g(x,x_0)^s}dv_g + R_2(\epsilon) + o(\epsilon) \nonumber
\\ &&
\end{eqnarray}
where  
\begin{eqnarray}\label{art-H-S-fct-test-p2-eqt-17}
R_2(\epsilon) &=& O\left(\int_{\B_\rho(x_0)}\frac{u_\epsilon^{4-2s}\epsilon\beta_{x_0}^2 + \epsilon^{3-s}\beta_{x_0}^{6-2s}}{d_g(x,x_0)^s}dv_g\right)=o(\epsilon)
% \\ &=& O\left(\epsilon^{3-s}\int_0^\delta\left[(\epsilon^{2-s} + r^{2-s})^{\frac{-1}{2-s}} + 1\right]r^{2-s}(1+O(r^2))dr\right)\nonumber
% \\ &=& O\left(\epsilon^{3-s}\int_0^\delta(1+r^{-1})r^{2-s}(1+O(r^2))dr\right)\nonumber
%\\ &=& o(\epsilon).
\end{eqnarray} 
Direct computations yield
\begin{eqnarray}\label{art-H-S-fct-test-p2-eqt-18}
\int_{\B_\rho(x_0)}\frac{u_\epsilon^{6-2s}}{d_g(x,x_0)^s}dv_g 
%&=& \epsilon^{3-s}\int_{\B_\rho(x_0)}\frac{dv_g}{d_g(x,x_0)^s(\epsilon^{2-s} + d_g(x,x_0)^{2-s})^{\frac{6-2s}{2-s}}} + o(\epsilon)\nonumber
%\\ &=& \epsilon^{3-s}\omega_2\int_0^\delta\frac{r^{2-s}dr}{(\epsilon^{2-s} + r^{2-s})^{\frac{6-2s}{2-s}}} + o(\epsilon)\nonumber
%\\ &=& \omega_2\int_0^{\frac{\delta}{\epsilon}}\frac{\rho^{2-s}dr}{(1 + \rho^{2-s})^{\frac{6-2s}{2-s}}} + o(\epsilon)\nonumber
= \int_{\R^n}\frac{\Phi^{\crit}(X)}{|X|^s}dX + o(\epsilon),
\end{eqnarray}
when $\epsilon\to 0$. Using that $\beta_{x_0} \in C^{0,\theta}(M)$ for all $\theta \in (0,1)$, we get that
\begin{eqnarray}\label{art-H-S-fct-test-p2-eqt-19}
\int_{\B_\rho(x_0)}\frac{(6-2s)|u_\epsilon|^{6-2s-2}u_\epsilon\sqrt{\epsilon}\beta_{x_0}}{d_g(x,x_0)^s}dv_g &=& 
\epsilon^{3-s}(6-2s)\beta_{x_0}(x_0)\omega_2\int_0^\rho\frac{r^{2-s}dr}{(\epsilon^{2-s} + r^{2-s})^{\frac{5-2s}{2-s}}} + o(\epsilon)\nonumber
\\ &=& \epsilon(6-2s)\beta_{x_0}(x_0)\omega_2\int_0^{\frac{\rho}{\epsilon}}\frac{t^{2-s}dt}{(1 + t^{2-s})^{\frac{5-2s}{2-s}}} + o(\epsilon)\nonumber
\\ &=& \epsilon(6-2s)\beta_{x_0}(x_0)\omega_2\int_0^{+\infty}\frac{t^{2-s}dt}{(1 + t^{2-s})^{\frac{5-2s}{2-s}}} + o(\epsilon),
\end{eqnarray}
when $\epsilon\to 0$. Since $\Delta_\delta\Phi = (3-s)\frac{\Phi^{\crit-1}}{|X|^s}$ in $\R^n$, a changes of variable and an integration by parts yields  
\begin{eqnarray}\label{art-H-S-fct-test-p2-eqt-19.0}
\omega_2\int_0^{+\infty}\frac{t^{2-s}dt}{(1 + t^{2-s})^{\frac{5-2s}{2-s}}} &=& \int_{\R^n}\frac{\Phi^{\crit-1}(X)}{|X|^s}dX
%\\ &=& (3-s)^{-1}\int_{\R^n}\Delta\Phi dX
%\\ &=& (3-s)^{-1}\lim_{R\rightarrow+\infty}\int_{\B_R(0)}\Delta\Phi dX
\\ &=& (3-s)^{-1}\lim_{R\rightarrow+\infty}\int_{\partial B_R(0)}-\partial_{\nu}\Phi dX,\nonumber
\end{eqnarray}
 where $\nu$ is the normal vector field on the Euclidean ball $B_R(0)$. Since $\partial_{\nu}\Phi = -|X|^{1-s}(1 + |X|^{2-s})^{\frac{-3+s}{2-s}}$ for all $X \in \R^n$, passing to the limit in \eqref{art-H-S-fct-test-p2-eqt-19.0} yields
$$\omega_2\int_0^{+\infty}\frac{t^{2-s}dt}{(1 + t^{2-s})^{\frac{5-2s}{2-s}}} = (3-s)^{-1}\omega_2.$$
%\begin{eqnarray*}
%\omega_2\int_0^{+\infty}\frac{\rho^{2-s}d\rho}{(1 + \rho^{2-s})^{\frac{5-2s}{2-s}}} &=& 
%(3-s)^{-1}\lim_{R\rightarrow+\infty}\int_{\partial\B_R(0)}\frac{|X|^{1-s}dX}{(1 + |X|^{2-s})^{\frac{3-s}{2-s}}}
%\\ &=& (3-s)^{-1}\lim_{R\rightarrow+\infty}\frac{R^{1-s}}{(1 + R^{2-s})^{\frac{3-s}{2-s}}}\times\omega_2 R^2
%\\ &=& (3-s)^{-1}\omega_2.
%\end{eqnarray*}
Hence, the latest relation and \eqref{art-H-S-fct-test-p2-eqt-19} give that  
\begin{equation}\label{art-H-S-fct-test-p2-eqt-20}
\int_{\B_\rho(x_0)}\frac{(6-2s)|u_\epsilon|^{6-2s-2}u_\epsilon\sqrt{\epsilon}\beta_{x_0}}{d_g(x,x_0)^s}dv_g = \epsilon\left(2\beta_{x_0}(x_0)\omega_2\right) + o(\epsilon),
\end{equation}
when $\epsilon\to 0$. Combining \eqref{art-H-S-fct-test-p2-eqt-17}, \eqref{art-H-S-fct-test-p2-eqt-18} and \eqref{art-H-S-fct-test-p2-eqt-20}  with \eqref{art-H-S-fct-test-p2-eqt-16}, we get that  
\begin{equation}\label{art-H-S-fct-test-p2-eqt-21}
\int_M\frac{v_\epsilon^{\crit}}{d_g(x,x_0)^s}dv_g = \int_{\R^n}\frac{\Phi^{\crit}(X)}{|X|^s}dX + \epsilon\left(2\beta_{x_0}(x_0)\omega_2\right) + o(\epsilon),
\end{equation}
when $\epsilon\to 0$.
\subsection{Expansion of $J(v_\epsilon)$ and proof of Theorem \ref{art-H-S-th-1}}
Equality \eqref{art-H-S-fct-test-p2-eqt-21}, 
%yields
%\begin{eqnarray}\label{art-H-S-fct-test-p2-eqt-23}
%\left(\int_M\frac{v_\epsilon^{\crit}}{d_g(x,x_0)^s}dv_g\right)^{\frac{2}{\crit}} &=& \left(\int_{\R^n}\frac{\Phi^{\crit}(X)}{|X|^s}dX\right)^{\frac{2}{\crit}}
%\left[1 + \frac{\epsilon}{3-s}\times\frac{2\beta_{x_0}(x_0)\omega_2}{\int_{\R^n}\frac{\Phi^{\crit}(X)}{|X|^s}dX}  + o(\epsilon)\right] \nonumber
%\\ &=& \left(\int_{\R^n}\frac{\Phi^{\crit}(X)}{|X|^s}dX\right)^{\frac{2}{\crit}}
%\left[1 + \epsilon\frac{2\beta_{x_0}(x_0)\omega_2}{\int_{\R^n}\Phi\Delta_\delta\Phi dX}  + o(\epsilon)\right]
%\end{eqnarray}
%when $\epsilon\to 0$. Combining 
\eqref{art-H-S-fct-test-p2-eqt-15} and \eqref{art-H-S-fct-test-p2-eqt-19.0} yield
\begin{eqnarray}\label{art-H-S-fct-test-p2-eqt-24}
J(v_\epsilon) &=& \frac{\int_M(|\nabla v_\epsilon|^2_g + av_\epsilon^2)dv_g}{\left(\int_M\frac{v_\epsilon^{\crit}}{d_g(x,x_0)^s}dv_g\right)^{\frac{2}{\crit}}}\nonumber 
\\ &=& K(3,s)^{-1}\left(1 - \epsilon\frac{2\beta_{x_0}(x_0)\omega_2}{\int_{\R^n}|x|^{-s}\Phi^{\crit}\, dx} + o(\epsilon)\right)
\end{eqnarray}
when $\epsilon\to 0$. Noting that $m(x_0)=\beta_{x_0}(x_0)$, we then get the following as a consequence of \eqref{art-H-S-fct-test-p2-eqt-24}:

\begin{theorem}\label{art-H-S-th-5} Let $(M,g)$ be a compact Riemannian Manifold of dimension $n = 3$. Let $a \in C^0(M)$ such that $\Delta_g + a$ is coercive, $x_0 \in M$ and $s \in (0,2)$. 
Assume that that the mass at $x_0$ is positive, that is $\beta_{x_0}(x_0)>0$. Then we have that 
$$\inf_{v\in H_1^2(M)\setminus\{0\}}J(v) < K(n,s)^{-1}.  $$
\end{theorem}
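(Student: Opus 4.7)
The plan is to read off the statement directly from the test-function expansion already established in \eqref{art-H-S-fct-test-p2-eqt-24}. Concretely, with $v_\epsilon = \eta u_\epsilon + \sqrt{\epsilon}\beta_{x_0}$ and recalling that $m(x_0) = \beta_{x_0}(x_0)$ by the very definition of the mass via the Green's function decomposition \eqref{art-H-S-fct-test-p2-eqt-2}, the expansion gives
$$
J(v_\epsilon) = K(3,s)^{-1}\left(1 - \epsilon\, \frac{2\,\omega_2\, m(x_0)}{\int_{\R^3}|X|^{-s}\Phi^{\crit}(X)\, dX} + o(\epsilon)\right)
$$
as $\epsilon \to 0$.

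The next step is to check that the coefficient of $\epsilon$ is strictly negative. The denominator $\int_{\R^3}|X|^{-s}\Phi^{\crit}(X)\, dX$ is a positive finite number, since $\Phi(X) = (1+|X|^{2-s})^{-(n-2)/(2-s)}$ is positive, bounded, and decays fast enough at infinity for $s\in(0,2)$, while the singularity at the origin is integrable for the same range of $s$. Similarly $\omega_2 > 0$ is the area of $\mathbb{S}^2$. By the assumption $m(x_0) > 0$, the whole coefficient multiplying $\epsilon$ is therefore strictly positive, and so the $\epsilon$-order correction to $K(3,s)^{-1}$ is strictly negative.

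Consequently, choosing $\epsilon > 0$ small enough so that the $o(\epsilon)$ remainder is dominated in modulus by half of the $\epsilon$-term yields $J(v_\epsilon) < K(3,s)^{-1}$, and since $v_\epsilon \in H_1^2(M)\setminus\{0\}$ (it is Lipschitz away from $x_0$ and controlled near $x_0$ by the same argument showing $u_\epsilon\in H_1^2(M)$, plus the regularity $\beta_{x_0}\in H_1^2(M)$), taking the infimum gives the claim.

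The main obstacle here is not the final argument but everything that feeds into it: the delicate cancellations in Sections 4.1 and 4.2 that isolated the mass term. In particular, the cross term $2\sqrt{\epsilon}\int_M(\Delta_g\beta_{x_0} + a\beta_{x_0})\eta u_\epsilon\, dv_g$ in the numerator, combined with the singular gradient piece of $u_\epsilon$, had to conspire via integration by parts against the Laplacian of $1/d_g(x,x_0)$ to produce the term $\omega_2\beta_{x_0}(x_0)$ after cancelling the $\epsilon\int_M a\eta^2/d_g(x,x_0)^2\, dv_g$ and the boundary contribution on $\partial \B_\rho(x_0)$. Once this cancellation is achieved, as it is in \eqref{art-H-S-fct-test-p2-eqt-15} and \eqref{art-H-S-fct-test-p2-eqt-21}, the theorem reduces to a one-line sign analysis.
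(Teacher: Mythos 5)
Your proposal is correct and follows exactly the paper's route: Theorem \ref{art-H-S-th-5} is obtained there precisely as a direct consequence of the expansion \eqref{art-H-S-fct-test-p2-eqt-24}, noting $m(x_0)=\beta_{x_0}(x_0)>0$ makes the first-order coefficient strictly negative, so $J(v_\epsilon)<K(3,s)^{-1}$ for $\epsilon$ small. Your additional checks (positivity and finiteness of $\int_{\R^3}|X|^{-s}\Phi^{\crit}dX$ and $v_\epsilon\in H_1^2(M)\setminus\{0\}$) are consistent with, and slightly more explicit than, the paper's one-line deduction.
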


\medskip\noindent 
{\bfseries Proof of Theorem  \ref{art-H-S-th-1}.}
Theorem \ref{art-H-S-th-1} follows from the existence result (Theorem \ref{art-H-S-th-3}) and the upper-bounds  (Theorem \ref{art-H-S-th-4}  and Theorem \ref{art-H-S-th-5}). 

\smallskip\noindent{\bfseries Proof of Theorem \ref{art-H-S-th-Dim3}.}
As one checks, the estimates \eqref{art-H-S-exp-n-5} and \eqref{art-H-S-fct-test-p2-eqt-24} hold when $M$ is a smooth compact manifold with boundary provided $x_0$ lies in the interior. Then Theorem \ref{art-H-S-th-1} extends to such a case, and Theorem \ref{art-H-S-th-Dim3} is a corollary.

\subsection{Examples with positive mass}
\begin{proposition}\label{art-H-S-cor-ex-1} Let $(M,g)$ be a compact Riemannian Manifold of dimension $n = 3$. Let $a \in C^0(M)$ such that $\Delta_g + a$ is coercive, $x_0 \in M$ and $s \in (0,2)$. 
If $\{a \lvertneqq c_{3,0}Scal_g\}$ or $\{a \equiv c_{3,0}Scal_g$ and $(M,g)$ is not conformally equivalent to the canonical $n$-sphere$\}$ then we have that : $$\inf_{v\in H_1^2(M)\setminus\{0\}}J(v) < K(3,s)^{-1}.$$
\end{proposition}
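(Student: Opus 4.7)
\medskip\noindent\textbf{Proof plan.} By Theorem \ref{art-H-S-th-5}, it suffices to show that under either alternative the mass $m(x_0) = \beta_{x_0}(x_0)$ is strictly positive. A direct evaluation of the formula for $c_{n,s}$ at $n=3$, $s=0$ gives $c_{3,0}=\tfrac{1}{8}$, and the operator $L_g := \Delta_g + \tfrac{1}{8}\hbox{Scal}_g$ is the conformal Laplacian in dimension $3$. Let $G^a$ and $G^c$ denote the Green's functions of $\Delta_g + a$ and $L_g$ at $x_0$, with respective mass terms $m_a(x_0)$ and $m_c(x_0)$ appearing in the expansion recalled in the introduction.

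First I would treat the borderline case $a\equiv \tfrac{1}{8}\hbox{Scal}_g$. The conformal blow-up $\tilde g := (G^c)^4 g$ of $g$ at $x_0$ is a scalar-flat metric on $M\setminus\{x_0\}$, and after performing the Kelvin-type inversion $x = X/|X|^2$ in exponential normal coordinates at $x_0$, this chart realizes $(M\setminus\{x_0\},\tilde g)$ as an asymptotically flat $3$-manifold whose ADM mass is a positive multiple of $m_c(x_0)$ (see, e.g., the exposition in Lee--Parker or Druet \cite{Druet-1}). The Positive Mass Theorem of Schoen--Yau then gives $m_c(x_0)\geq 0$, with equality if and only if $\tilde g$ is isometric to flat $\R^3$, equivalently $(M,g)$ is conformally equivalent to $(\mathbb{S}^3, g_{\mathrm{std}})$. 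Under the second alternative of the proposition, this yields $m_c(x_0)>0$.

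For the first alternative $a \lvertneqq \tfrac{1}{8}\hbox{Scal}_g$, I would compare $G^a$ and $G^c$ directly. Their difference $w:=G^a-G^c$ has no singularity at $x_0$, since both share the leading term $(\omega_2 d_g(\cdot,x_0))^{-1}$, so $w$ extends continuously to $M$ with $w(x_0) = m_a(x_0)-m_c(x_0)$. Subtracting the two defining distributional equations cancels the Dirac masses and produces, on all of $M$,
$$(\Delta_g+a)w = \bigl(\tfrac{1}{8}\hbox{Scal}_g-a\bigr)G^c.$$
The right-hand side is nonnegative, continuous on $M\setminus\{x_0\}$, not identically zero, and belongs to $L^p(M)$ for every $p\in[1,3)$ since $G^c\sim(\omega_2 d_g(\cdot,x_0))^{-1}$ near $x_0$. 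Coercivity of $\Delta_g+a$ makes $G^a$ strictly positive on $M\times M$, so the unique solution $w$ is given by $w(x)=\int_M G^a(x,y)\bigl(\tfrac{1}{8}\hbox{Scal}_g-a\bigr)(y)G^c(y)\,dv_g(y)>0$ everywhere on $M$. Evaluating at $x_0$ gives $m_a(x_0)>m_c(x_0)\geq 0$, the last inequality being the Positive Mass Theorem for $L_g$ already used in the previous case.

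The only serious ingredient is the Positive Mass Theorem of Schoen--Yau, invoked as a black box; the Green's function comparison is routine given coercivity and positivity, and the conformal/inversion computation identifying $m_c(x_0)$ with the ADM mass of $\tilde g$ is standard in this literature. Combining the two cases with Theorem \ref{art-H-S-th-5} concludes the proof.
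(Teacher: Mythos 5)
Your proposal is correct and follows essentially the same route as the paper: the paper's Lemma on Green's function comparison (proved via the Green representation formula, exactly as in your computation for $w=G^a-G^c$) combined with the Positive Mass Theorem for the conformal Laplacian $\Delta_g+\tfrac18\hbox{Scal}_g$, and then Theorem \ref{art-H-S-th-5}. The only cosmetic difference is that you spell out the identification of $m_c(x_0)$ with the ADM mass of the conformal blow-up, which the paper delegates to the references.
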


\medskip\noindent Indeed, the positivity of the mass in this case was proved by Druet \cite{Druet-2}. We incorporate the proof for the sake of self-completeness.

\begin{lemma}\label{a-and-Green-fct-lemma} Let $(M,g)$ be a compact Riemannian Manifold of dimension $n = 3$. We consider $a, a' \in C^0(M)$ such that operators $ \Delta_g + a$ and $\Delta_g + a'$ are coercive. We denote as $G_x, G_x'$ their respective Green's function at any point $x\in M$. We assume that $a \lvertneqq a'$. Then $\beta_x > \beta'_x$ for all $x \in M$, where $\beta_x, \beta'_x \in C^{0,\theta}(M)$, $\theta \in (0,1)$ are such that
\begin{equation}\label{a-and-Green-fct-lemma-eqt-1}
\omega_2G_x = \frac{\eta_x}{d_g(x,\cdot)} + \beta_x \ and \  \omega_2G'_x = \frac{\eta_x}{d_g(x,\cdot)} + \beta'_x.
\end{equation} 
\end{lemma}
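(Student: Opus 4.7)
\medskip\noindent\textbf{Proof plan.} The key observation is that subtracting the two distributional Green's equations
\[
\Delta_g G_x + a G_x = D_x, \qquad \Delta_g G'_x + a' G'_x = D_x
\]
makes the Dirac masses cancel. Setting $w := \beta_x - \beta'_x = \omega_2(G_x - G'_x)$, the singular parts $\eta_x/d_g(x,\cdot)$ in \eqref{a-and-Green-fct-lemma-eqt-1} also annihilate, so $w$ is continuous, and in fact lies in $C^{0,\theta}(M)$ globally (including at the pole $x$).

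Rewriting the difference of the two equations in terms of $w$, one finds that
\[
\Delta_g w + a w \;=\; \omega_2 (a' - a)\, G'_x \quad \text{on } M
\]
in the distributional sense. The right-hand side is nonnegative since $a \lvertneqq a'$ and $G'_x > 0$ on $M\setminus\{x\}$, and it is not identically zero. Because $G'_x$ has a $1/d_g(x,\cdot)$-type singularity, the right-hand side lies in $L^p(M)$ for every $p \in (1,3)$, and standard elliptic regularity upgrades $w$ to an $H_2^p$ solution, so the equation holds in a strong sense with no trouble at $x$.

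To conclude that $w > 0$ everywhere on $M$, I would invoke positivity of the Green's function $\mathcal{G}$ of the coercive operator $\Delta_g + a$. Writing
\[
w(y) \;=\; \omega_2 \int_M \mathcal{G}(y,z)\, (a'-a)(z)\, G'_x(z)\, dv_g(z),
\]
the integrand is pointwise nonnegative on $M$ and strictly positive on the nonempty open set $\{a' > a\} \cap (M\setminus\{x\})$, so $w(y) > 0$ for every $y\in M$. This is equivalent to applying the strong maximum principle to the coercive operator $\Delta_g + a$ acting on $w$.

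The only delicate point is bookkeeping at the pole $x$: one must be sure that the cancellation of the two $\eta_x/d_g(x,\cdot)$ singularities genuinely produces a $w$ satisfying the above elliptic equation on all of $M$ (and not merely on $M\setminus\{x\}$ modulo an unknown distributional remainder supported at $x$). This is what makes the $L^p$ integrability of $(a'-a)G'_x$ and the resulting $H_2^p$ regularity of $w$ essential; once that is in place, the argument reduces to the well-known positivity property of the Green's function for a coercive operator.
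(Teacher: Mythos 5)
Your argument is correct and is essentially the paper's own proof: the paper works with $h_x=\beta'_x-\beta_x$, applies $\Delta_g+a'$ to get $L'(h_x)=-(a'-a)G_x\leq 0$, and represents $h_x$ via the Green's function of $\Delta_g+a'$, which is exactly the mirror image of your computation with $w=\beta_x-\beta'_x$, the operator $\Delta_g+a$, and its Green's function. Your supporting points (the $L^p$, $p\in(1,3)$, integrability of the right-hand side, the $H_2^p$ regularity justifying the Green representation, and the strict sign coming from $a\lvertneqq a'$ and positivity of the Green's functions) coincide with the paper's.
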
 

\begin{proof} We fix $x \in M$ and we define $h_x = \beta'_x - \beta_x$, where $\beta'_x$ and $\beta_x$ are as in \eqref{a-and-Green-fct-lemma-eqt-1}. Noting $L:=\Delta_g+a$ and $L':=\Delta_g+a'$, we have that $L'(h_x) = -(a' - a)G_x \leq 0$.
%\begin{eqnarray*}
%L'(G'_x) &=& L'(G_x + h_x)
%\\ &=& L(G_x) + (a' - a)G_x + L'(h_x).
%\end{eqnarray*}
%By definition of a Green function, we then get that
%\begin{equation*}\label{a-and-Green-fct-lemma-eqt-2}
%L'(h_x) = -(a' - a)G_x \leq 0.
%\end{equation*} 
Since $h_x \in H_2^p(M)$ for all $p\in (1,3)$, then for all $y \in M$, Green's formula yields 
\begin{equation*}\label{a-and-Green-fct-lemma-eqt-3}
h_x(y) = -\int_MG'_y(z)(a' - a)(z)G_x(z)\,dv_g(z).
\end{equation*} 
Therefore $h_x\leq 0$ since $a\leq a'$. Moreover, since $a\not\equiv a'$, we have that $h_x < 0$. This ends the proof.
\end{proof}

%By \eqref{a-and-Green-fct-lemma-eqt-2}, relation \eqref{a-and-Green-fct-lemma-eqt-3} implies that $h_x \leq 0$. At last, if there exists $y \in M$ such that $h_x(y) = 0$, then combining  
%\eqref{a-and-Green-fct-lemma-eqt-2} and \eqref{a-and-Green-fct-lemma-eqt-3}, we get $a \equiv a'$ on $M$, which contradicts the hypothesis. Therefore $h_x < 0$. This ends the proof.
%\end{proof}

\medskip\noindent  {\bf Proof of Proposition \ref{art-H-S-cor-ex-1}} : We consider the operator $L^0 := \Delta_g +  c_{3,0}Scal_g$, $\beta^0$ the mass of $(M,g)$ corresponding to $L^0$. 
The Positive Mass Theorem (see \cite{Schoen-2}, \cite{Schoen-3}) gives that $\beta^0_x(x) \geq 0$, the equality being achieved only in the conformal class of the canonical sphere. It then follows from Lemma \ref{a-and-Green-fct-lemma} that $\beta_{x_0}(x_0) > 0$ when $\{a \lvertneqq c_{3,0}Scal_g\}$ or $\{a \equiv  c_{3,0}Scal_g$ and $(M,g)$ is not conformally equivalent to the unit $n$-sphere$\}$. It then follows from Theorem \ref{art-H-S-th-5} that $\inf_{v\in H_1^2(M)\setminus\{0\}}J(v) < K(3,s)^{-1}$.

%Morever, 
%the last inequality is strict,  if $a \equiv  c_{3,0}Scal_g$ and $(M,g)$ is not conformally equivalent to the unit $n$-sphere. 
%\\ $\bullet$ If $a \lvertneqq c_{3,0}Scal_g$, it then follows from the above lemma \ref{a-and-Green-fct-lemma} that $\beta > \beta^0$. This implies that $\beta_{x_0}(x_0) > 0$.
%\\ $\bullet$ If $a \equiv  c_{3,0}Scal_g$ and $(M,g)$ is not conformally equivalent to the unit $n$-sphere then $\beta_{x_0}(x_0) = \beta^0_{x_0}(x_0) > 0$.
%\\ In both case, we then get that $\beta_{x_0}(x_0)>0$. It then follows from Theorem \ref{art-H-S-th-5} that $\inf_{v\in H_1^2(M)\setminus\{0\}}J(v) < K(3,s)^{-1}$. 

\begin{bibdiv}
\begin{biblist}

\bib{Aubin-2}{article}{
   author={Aubin, T.},
   title={\'Equations diff\'erentielles non lin\'eaires et probl\`eme de
   Yamabe concernant la courbure scalaire},
   journal={J. Math. Pures Appl. },
   volume={55},
   date={1976},
   %number={3},
   pages={269--296},
}
\bib{Aubin-1}{article}{
   author={Aubin, T.},
   title={Probl\`emes isop\'erim\'etriques et espaces de Sobolev},
   journal={J. Math. Pures Appl.},
   volume={11},
   date={1976},
   %number={3},
   pages={573--598},
}
\bib{Druet-1}{article}{
   author={Druet, O.},
   title={Optimal Sobolev inequality and Extremal functions. The three-dimensional case},
   journal={Indiana University Mathematics Journal},
   volume={51},
   date={2002},
   pages={69--88}
   number={1},
}
\bib{Druet-2}{article}{
   author={Druet, O.},
   title={Compactness for Yamabe Metrics in Low Dimensions},
   journal={IMRN International Mathematics Research Notices},
   %volume={51},
   date={2004},
   pages={1143--1191}
   number={23},
}
\bib{GH-Kang}{article}{
   author={Ghoussoub, N.},
   author={Kang, X.S.}
   title={Hardy-Sobolev critical elliptic equations with boundary singularities},
   journal={AIHP-Analyse non lin\'eaire},
   volume={21},
   date={2004},
   pages={767--793}
}
\bib{GH-R1}{article}{
   author={Ghoussoub, N.},
   author={Robert, F.}
   title={The effect of curvature on the best constant in the Hardy-Sobolev inequalities},
   journal={GAFA, Geom. funct. anal.},
   volume={16},
   date={2006},
   pages={1201--1245}
}
\bib{GH-Yuan}{article}{
   author={Ghoussoub, N.},
   author={Yuan, C.},
   title={Multiple solutions for quasi-linear PDEs involving the critical
   Sobolev and Hardy exponents},
   journal={Trans. Amer. Math. Soc.},
   volume={352},
   date={2000},
   number={12},
   pages={5703--5743},
}
\bib{Gil-Tru}{book}{
   author={Gilbarg, G.},
   author={Trudinger, N.S.},
   title={Elliptic Partial Differential Equations of Second Order, Second edition},
   publisher={Grundlehren der mathematischen Wissenschaften, Springer, Berlin},
   volume={224},
   date={1983},
   %number={3},
   %pages={573--598},
}
\bib{Hebey-1}{book}{
    title={Introduction \`a l'analyse non lin\'eaire sur les Vari\'et\'es},
    author={Hebey, E.},
    publisher={Diderot},
   date={1997},
   adress={Paris}
}
\bib{Hebey-2}{book}{
   title={Non linear analysis on Manifolds : Sobolev spaces and inequalities},
   author={Hebey, E.},
   date={2001},
   publisher={American Mathematical Society, Collection : Courant lecture notes in mathematics},
   adress={5 New york University, Courant institut of Mathematics sciences, New york}
}
\bib{jaber:best:constant}{article}{
   author={Jaber, H.},
   title={Sharp Constant in the Riemannian Hardy-Sobolev inequality},
note={In preparation}

}
\bib{Kang-Peng}{article}{
   author={Kang, D.},
   author={Peng, S.},
   title={Existence of solutions for elliptic equations with critical Sobolev-Hardy exponents},
   journal={Nonlinear Analysis},
   volume={56},
   date={2004},
   pages={1151--1164},
}
\bib{Lee-parker}{article}{
   author={Lee, J.},
   author={Parker, T.}
   title={The Yamabe problem},
   journal={Bull. Amer. Math. Soc. (N.S.)},
   volume={17},
   date={1987},
   pages={37--91}
   number={1}
}
\bib{Lieb-1}{article}{
   author={Lieb, E.H.},
   title={Sharp constants in the Hardy-Littlewood-Sobolev and related inequalities},
   journal={Ann. of Mathematics},
   volume={118},
   date={1983},
   pages={349--374}
}
\bib{Li-Ruf-Guo-Niu}{article}{
   author={Li, Y.},
   author={Ruf, B.},
   author={Guo, Q.},
   author={Niu, P.},
   title={Quasilinear elliptic problems with combined critical Sobolev-Hardy terms},
   journal={Annali di Matematica},
   volume={192},
   date={2013},
   pages={93--113},
}
\bib{Musina}{article}{
   author={Musina, R.},
   title={Existence of extremals for the Maz'ya and for the
   Caffarelli-Kohn-Nirenberg inequalities},
   journal={Nonlinear Anal.},
   volume={70},
   date={2009},
   number={8},
   pages={3002--3007},
 }
\bib{Pucci-Servadei}{article}{
   author={Pucci, P.},
   author={Servadei, R.},
   title={Existence, non existence and regularity of radial ground states for p-Laplacian equations with singular weights},
   journal={Ann. I. H. Poincar\'e},
   volume={25},
   date={2008},
   pages={505-537},
}
\bib{Rodemich}{article}{
   author={Rodemich, E.},
   title={The Sobolev inequalities with best possible constant},
   journal={ Analysis seminar at California Institute of technology},
   %volume={110},
   date={1966},
   %pages={353--372},
}
\bib{Schoen-1}{article}{
   author={Schoen, R.},
   title={Conformal deformation of a Riemannian metric to a constant scalar curvature},
   journal={Journal of Differential Geometry},
   volume={118},
   date={1984},
   pages={479--495}
}
\bib{Schoen-2}{article}{
   author={Schoen, R.},
   author={Yau, S.}
   title={On the proof of the positive mass conjecture in general relativity},
   journal={Comm. Math. Phys.},
   volume={65},
   date={1979},
   pages={45--76}
   number={1}
}
\bib{Schoen-3}{article}{
   author={Schoen, R.},
   author={Yau, S.}
   title={Proof of the positive action-conjecture in quantum relativity},
   journal={Phys. Rev. Lett.},
   volume={42},
   date={1979},
   pages={547--548}
   number={9}
}
\bib{Talenti}{article}{
   author={Talenti, G.},
   title={Best constant in Sobolev inequality},
   journal={Ann. di Matem. Pura ed Appl.},
   volume={110},
   date={1976},
   pages={353--372},
}
\bib{Elhadji-A.T.}{article}{
   author={Thiam, E.H.A.},
   title={Hardy and Hardy-Soboblev Inequalities on Riemannian Manifolds},
   %journal={IMRN International Mathematics Research Notices},
   %volume={51},
   date={2013},
   %pages={1143--1191}
   %number={23},
   note={Preprint}
}
\bib{Tru-2}{article}{
   author={Trudinger, N.S.},
   title={Remarks concerning the conformal deformation of Riemannian structures on compact Manifolds},
   journal={Ann. Scuola Norm. Sup. Pisa},
   volume={22},
   date={1968},
   pages={265--274},
}
\bib{Yamabe}{article}{
   author={Yamabe, H.},
   title={On a Deformation of Riemannian Structures on Compact Manifolds},
   journal={Osaka Math. J.},
   volume={12},
   date={1960},
   pages={21--37}
}

\end{biblist}
\end{bibdiv}

\end{document}